\newtheorem{thm}{Theorem}[section]
\newtheorem{lem}[thm]{Lemma}
\newtheorem{cor}[thm]{Corollary}
\newenvironment{customthm}[1]
{\innercustomthm}
{\endinnercustomthm}
\theoremstyle{remark}
\newtheorem{rem}[thm]{Remark}
\newtheorem{notat}{Notation}
\providecommand*{\propertyautorefname}{Property}
\newtheorem{property}{\propertyautorefname}
\newcommand{\RR}{\mathbb{R}}
\newcommand{\CC}{\mathbb C}
\newcommand{\ZZ}{\mathbb{Z}}
\newcommand{\PP}{\mathbb{P}}
\newcommand{\cX}{\mathcal{X}}
\newcommand{\cY}{\mathcal{Y}}
\newcommand{\cM}{\mathcal{M}}
\newcommand{\cO}{\mathcal{O}}
\newcommand{\cC}{\mathcal{C}}
\newcommand{\sA}{\mathscr{A}}
\newcommand{\sB}{\mathscr{B}}
\newcommand{\sQ}{\mathscr{Q}}
\newcommand{\fjrw}[2]{ \left\lceil #1 \:; #2 \right\rfloor }
\newcommand{\jw}{\mathfrak{j}}
\newcommand{\set}[1]{\{#1\}}
\newcommand{\tw}[1]{{#1}_{tw}}
\newcommand{\s}[1]{\Sigma #1}
\newcommand{\mb}[1]{\mathbf{#1}}
\DeclareMathOperator{\Hess}{Hess}
\DeclareMathOperator{\SL}{SL}
\DeclareMathOperator{\GL}{GL}
\DeclareMathOperator{\lcm}{lcm}
\DeclareMathOperator{\Fix}{Fix}
\DeclareMathOperator{\age}{age}
\definecolor{pistachio}{rgb}{0.58, 0.77, 0.45}
\definecolor{red(munsell)}{rgb}{0.95, 0.0, 0.24}
\definecolor{eggshell}{rgb}{0.94, 0.92, 0.84}
\begin{document}
\date{\today}

\title{Borcea--Voisin mirror symmetry for Landau--Ginzburg models}

\author{Amanda Francis}
\address{Mathematical Reviews, American Mathematical Society, Ann Arbor, MI 48103, USA}
\email{aefr@umich.edu}
\author{Nathan Priddis}
\address{Institut f\"ur Algebraische Geometrie, Leibniz Universit\"at Hannover, 30167 Hannover, Germany}
\curraddr{Mathematics Department, Brigham Young University, Provo, UT 84604, USA}
\email{priddis@math.byu.edu}
\author{Andrew Schaug}
\address{Department of Mathematics, University of Michigan, Ann Arbor, MI 48109, USA}
\email{trygve@umich.edu}

\begin{abstract}
FJRW theory is a formulation of physical Landau--Ginzburg models with a rich algebraic structure, rooted in enumerative geometry. As a consequence of a major physical conjecture, called the Landau--Ginzburg/Calabi--Yau correspondence, several birational morphisms of Calabi-Yau orbifolds should correspond to isomorphisms in FJRW theory. In this paper, we exhibit some of these isomorphisms that are related to Borcea--Voisin mirror symmetry. In particular, we develop a modified version of BHK mirror symmetry for certain LG models. Using these isomorphisms, we prove several interesting consequences in the corresponding geometries. 
\end{abstract}

\maketitle

\section{Introduction}

Mirror symmetry is a phenomenon first described nearly thirty years ago by physicists. One can think of mirror symmetry as an exchange of information determined by the K\"ahler structure of some physical model with information determined by the complex structure of another physical model. 

Mathematically, several different constructions have been described for various models, each in a different context. In this paper, we will focus on two particular constructions of mirror symmetry. The first, which we call Borcea--Voisin (BV) mirror symmetry, relates two Calabi--Yau (CY) threefolds of a particular form, and the second, which has come to be known as Berglund--H\"ubsch--Krawitz (BHK) mirror symmetry, relates two Landau--Ginzburg (LG) models. We extend BHK mirror symmetry to a new formulation of mirror symmetry for LG models based on the ideas of BV mirror symmetry and the Landau--Ginzburg/Calabi--Yau (LG/CY) correspondence. The main theorem of this work establishes an isomorphism of state spaces for our LG mirror models.  


It is generally expected that computations should be less difficult on the LG side of the LG/CY correspondence, thus we expect our new form of mirror symmetry to improve our ability to make calculations for Borcea--Voisin mirror symmetry. This work gives credence to this expectation; we look to the geometry for inspiration, but make most calculations on the LG model. In particular, we will prove several geometric consequences of this new version of mirror symmetry, which are otherwise difficult to obtain, 
including a generalization of mirror symmetry for (geometric) BV models. We end with a result regarding the Frobenius structure of the LG model state spaces. 

The mirror symmetry construction which we call Borcea--Voisin mirror symmetry was first described by Borcea in \cite{Borcea} and Voisin in \cite{Voisin} in the early 1990's. In particular, they constructed a class of Calabi--Yau threefolds by first taking the product of an elliptic curve $E$ with involution $\sigma_E$, and a K3 surface $S$ with involution $\sigma_S$ and constructing the quotient $(E \times S)/\sigma$ by the involution $\sigma = (\sigma_E , \sigma_S)$. 
The construction known as BV mirror symmetry is defined for this particular class of Calabi--Yau threefolds. 

BV mirror symmetry relies on mirror symmetry for K3 surfaces with involution, as described in the work of Nikulin (see \cite{Ni}). That is, to each K3 surface $S$ with involution, there is a family of K3 surfaces that are mirror dual to $S$.

With this in mind, the mirror symmetry posited by Borcea and Voisin is described as follows. Let $S'$ be a K3 surface with involution $\sigma_{S'}$ mirror dual to $S$, and consider the crepant resolution $\widetilde \cY'$ of the quotient
\[
(E\times S')/\sigma' 
\]
where $\sigma'=(\sigma_E,\sigma_{S'})$. The pair $\widetilde \cY$ and $\widetilde \cY'$ are said to be a Borcea--Voisin mirror pair. (Note that  here we may treat elliptic curves as self-mirror.)  

In a much different way we can define BHK mirror symmetry for certain LG models. An LG model is a pair $(W,G)$ of a quasihomogeneous polynomial $W$ and group of symmetries $G$. Under certain conditions (see Section~\ref{sec:BHK}), we can associate to the pair $(W,G)$ another LG model $(W^T, G^T)$, called the BHK mirror. It has been predicted and partially verified by mathematicians that the A--model construction for $(W,T)$ will be equivalent in a certain way (even in non-CY cases) to the B--model construction for $(W^T,G^T)$. 

Although mirror symmetry makes predictions for equivalence of the A--model and the B--model on many levels, in this paper we focus  on the state space 
of these models. In other words, we consider the A--model state space 
$\sA_{W,G}$, constructed by Fan, Jarvis, and Ruan in \cite{FJR13}---and the B--model state space $\sB_{W^T,G^T}$, given by Saito and Givental in \cite{Sai1, Sai2, Sai3, ST, Giv1, Giv2}. See Section~\ref{sec:LGmodels} for a definition of both models. 
These are known to be isomorphic as vector spaces (see \cite{Kr}), and in some cases as Frobenius algebras (see \cite{FJJS}), though there are still some question regarding the proper definitions of the Frobenius algebra structure. We will return to this question later. 

The LG/CY correspondence predicts a deep relationship between CY orbifolds and LG models in certain cases. On the one side of the LG/CY corresondence, we have the LG model defined by the pair $(W,G)$, and on the CY side, we have the orbifold quotient $X_{W,G}:=[X_W/\widetilde{G}]$ in a quotient of weighted projective space. In this notation, which we use throughout this work, $X_W$ is a hypersurface defined by $W$ and the group $\widetilde{G}$ is simply the quotient of $G$ by the \emph{exponential grading operator}. There are certain conditions on $W$ and $G$, that make $X_{W,G}$ a Calabi--Yau orbifold which we describe in Section~\ref{sec:LGmodels}). On the simplest level, the LG/CY correspondence provides an isomorphism between the Chen-Ruan (orbifold) cohomology $H^*_{CR}(X_{W,G}, \CC)$ (the CY state space) and the FJRW state space $\sA_{W,G}$ mentioned above. This was proven for hypersurfaces by Chiodo--Ruan in \cite{ChR}. 

Somewhat more generally, we may consider quotients of complete intersections $[\{W_1 = \ldots = W_n = 0\}/\widetilde G]$ in products of weighted projective spaces, and relate these to the FJRW theory of $(\sum_i W_i, G)$. It is expected that the correspondence will hold for these complete intersections as well, though it has not been proven. There is some work of Chiodo--Nagel (see \cite{ChN}) and Clader (see \cite{clader}) in this direction.

In particular, the mirror symmetry we have described for CY threefolds is related by the LG/CY correspondence to mirror symmetry for LG models. Using this relationship, we will extend BHK mirror symmetry to certain LG models.  
In this article, we will not be interested in the usual BHK mirror symmetry relating $(W,G)$ and $(W^T, G^T)$, but instead consider the B--model state space for another pair, which we define later. This is the extension of BHK mirror symmetry mentioned earlier, which we will call BVLG mirror symmetry.

The construction of Borcea--Voisin fits into this picture in the following way. First we construct a K3 surface. There is a list of the 95 weight systems $(v_0,\dots,v_3;d_2)$ given by Reid (unpublished) 
and Yonemura in \cite{Yo}, such that the resolution of a quasismooth hypersurface of degree $d_2$ in the appropriate weighted projective space is a K3 surface. Each such weight system satisfies the \emph{Calabi--Yau condition} $d_2=\sum_{i=0}^3 v_i$. We consider the pair $(W,G)$ given by a polynomial of the form 
\begin{equation}\label{e:Wform}
W_2=y_0^2+f(y_1,y_2,y_3) 
\end{equation}
which is quasihomogeneous of degree $d_2$ with respect to this weight system and a group $G_2$ satisfying the same conditions. 
(See Section~\ref{sec:LGmodels} for a definition of these groups.) Of the 95 weight systems, 44 of them admit such a polynomial, namely those weights systems with the property that $d_2=2v_i$ for some $i\in \set{0,\dots, 3}$. We write $X_{W_2,G_2}$ for the quotient by the group $\widetilde{G}_2$ of the hypersurface defined by the vanishing of $W_2$ in the appropriate quotient of weighted projective space. There is an obvious nonsymplectic involution on $X_{W_2,G_2}$ namely $\sigma_2:y_0\to -y_0$. The minimal resolution of $X_{W_2,G_2}$ is a K3 surface with an involution induced by $\sigma_2$. 

We construct an elliptic curve in a similar way. In other words, we can consider polynomials of the form $W_1=x_0^2+f_1(x_1,x_2)$ quasihomogeneous with respect to one of the weight systems $(u_0,u_1,u_2;d_1)=(3,2,1;6)$ or $(2,1,1;4)$, and a group of diagonal symmetries G satisfying the same conditions mentioned previously.  
The resolution of $X_{W_1,G_1}$ will be an elliptic curve and has an obvious corresponding involution. 

Setting $\sigma=(\sigma_1,\sigma_2)$ we call the quotient stack 
\[
\Big[(X_{W_1,G_1}\times X_{W_2,G_2})/\sigma \Big]
\]
a \emph{Borcea--Voisin orbifold}. As mentioned above, this is a complete intersection in the quotient of a product of weighted projective spaces. This orbifold has a crepant resolution, which is a Borcea--Voisin threefold. One expects the LG/CY correspondence to hold for this model as well, however it falls outside of the work of Chiodo--Ruan \cite{ChR} and Chiodo--Nagel \cite{ChN}. We will return to the question of the LG/CY correspondence for this case in Section~\ref{s:geometry}, where we provide a state space isomorphim between the CY state space and the FJRW state space.

Inspired by the LG/CY correspondence, we will describe the corresponding LG model, which we will see (in Section~\ref{ss:GLSM} is given by the sum $W=W_1+W_2$ and the group generated by $\sigma$ and $G_1\times G_2$. We denote this group by $\s{(G_1\times G_2)}$. Thus, our LG model is the pair $(W, \s{(G_1\times G_2)})$. In fact, we can generalize this construction on both sides of the LG/CY correspondence to more arbitrary symmetry groups and to any dimension.

BV mirror symmetry requires a mirror K3 surface to construct the mirror manifold for a given BV threefold. In \cite{involutions}, Artebani--Boissi\`ere--Sarti showed that the mirror symmetry described by Nikulin for K3 surfaces with involution agrees with what one would expect to obtain from BHK mirror symmetry for the pair $(W_2,G_2)$. Thus we can also describe a Borcea--Voisin mirror pair by constructing the K3 surface defined by $(W_2^T, G_2^T)$.

Interestingly, the LG model provided by BHK mirror symmetry does not yield the same mirror as one would expect from Borcea--Voisin mirror symmetry. In this article we suggest another formulation of LG to LG mirror symmetry, which matches the Borcea--Voisin mirror symmetry construction and extends BHK mirror symmetry to this setting. We do this by defining the pair $(W^T,\s{(G_1^T\times G_2^T)})$ as the Borcea--Voisin LG mirror dual to the pair $(W, \s{(G_1\times G_2)})$.

The main result of this work justifies calling these a mirror pair, giving an isomorphism of state spaces. We state this result as follows:  

\begin{customthm}{1}\label{thm1}
There is a bi-degree preserving vector space isomorphism $\sA_{W,\s{G}}\cong \sB_{W^T,\s{G^T}}$, relating the A--model state space and the B--model state space for BVLG mirror pairs $(W,\s{G})$ and $(W^T, \s{G^T})$. 
\end{customthm}

On the way to proving this theorem, we obtain an isomorphism (called the twist map) within FJRW theory, which facilitates several computations of interest in the Landau-Ginzburg mirror symmetry arena. This is similar to the geometric \emph{twist map} described by Borcea in \cite{Borcea}.

In Section~\ref{sec:LGmodels}, we describe the construction of each state space for the LG models, and in Section~\ref{sec:twist} we discuss the LG twist map and prove the main theorem. In Section~\ref{s:geometry} we prove several interesting geometric consequences of the main theorem. 

In some sense these theorems are LG analogues of results in both the work of Artebani--Boissi\`ere--Sarti in \cite{ABS} and of Chiodo--Kalashnikov--Veniani in \cite{CKV} simultaneously, which we discuss below. Other than the examples computed by the last author in \cite{Schaug} this is the first time we are aware that this particular LG model has been considered in the literature. 

The state space of FJRW theory possesses an interesting algebraic structure as mentioned earlier; it can be given the structure of a Frobenius algebra defined in \cite{FJR13}, which is supercommutative. The LG B--model state space can also be given such a structure. In Section~\ref{sec:alg_isom}, we also extend the isomorphism of state spaces to an isomorphism of Frobenius algebras on a certain subspace of the LG state spaces, namely, the even-graded subspace. 

Up to this point, to our knowledge, no one has yet considered the $\ZZ_2$ grading, and so it was thought that the A--model state space was commutative. On the other side, the B--model has several possible definitions, including that of Krawitz \cite{Kr}, Basalaev--Takahashi--Werner \cite{BTW}, and He--Li--Li \cite{HLL}. The definition of Krawitz does not have any supercommutativity, whereas the other two do. However, if we restrict to even graded subspaces, the supercommutativity becomes simply commutativity. 

In this article, we restrict ourselves to even-graded subspace of the state space in order to avoid complications arising from supercommutativity. It would be interesting in the future to compare the various B--models.

\subsection{Related Work} The Borcea--Voisin construction has been the topic of several recent articles, which are related to the current work. We briefly review those results.  

In \cite{ABS}, Artebani--Boissi\`ere--Sarti prove a theorem in which they relate BHK mirror symmetry and BV mirror for the same polynomials considered here. There they consider only geometric models, using the LG model as a guide. They prove that the Calabi--Yau threefolds of Borcea--Voisin are birationally equivalent to the image of the twist map of Borcea (see \cite{Borcea} and Section~\ref{ss:twistmap}). 

Their result relies on the assumption that $\gcd(u_0,v_0)=1$. In order to understand this restriction better, consider that there are two possible weight systems for $W_1$ yielding an elliptic curve and 44 possible weight systems for $W_2$ yielding a K3 surface with involution. Recall in this construction, we require our polynomials to be of the form \eqref{e:Wform}. Only 48 of the 88 possible combinations of weight systems satisfy the gcd condition imposed in \cite{ABS}. In this article we generalize this result in two ways. We remove the restriction on gcd's, and we extend the construction to all dimensions. 

In \cite{CKV} Chiodo--Kalashnikov--Veniani undertake to prove a mirror statement for the geometric model (i.e. for orbifold BV models), again using the LG model as a guide. There, however, the goal is to provide a mirror map for BV orbifolds. In other words, they prove an isomorphism of Chen--Ruan cohomology, similar to Theorem~\ref{cor:BVMS} in the current work. Chiodo--Kalashnikov--Veniani consider two separate LG models, one for $W_1$ and one for $W_2$, and then use a K\"unneth type formula to prove their result. Here we also generalize this result with a different method of proof, removing the restriction that $G$ be a product of groups, which was necessary in their work. When $G$ is not a product of groups, the crepant resolution of the quotient
\[
X_{W_1}\times X_{W_2}/\widetilde{\s{G}}
\] 
is no longer a Borcea--Voisin threefold, but rather is the Nakamura--Hilbert scheme of $\widetilde{\s{G}}$--orbits of $X_{W_1}\times X_{W_2}$.

In \cite{Schaug2}, the last author has considered exactly the form of mirror symmetry we propose here with the restriction that the defining polynomials must be Fermat type. In fact, he was able to show that for the mirror pairs we consider here, there is a mirror map relating the FJRW invariants of the A--model to the Picard--Fuchs equations of the B--model. In \cite{Schaug} he also gave an LG/CY correspondence relating the FJRW invariants of the pair $(W,\s{G})$ to the corresponding Gromov--Witten invariants of the corresponding Borcea--Voisin orbifold. Although these results are broader in scope, the restriction to Fermat type polynomials is  significant, reducing the number of weight systems from which one can select a K3 surface to 10 (from the 48 mentioned above). Furthermore, there is no general method of proof for a state space isomorphism provided there. However, we expect results regarding the FJRW invariants, Picard--Fuchs equations and GW invariants to hold in general, and the state space isomorphism we establish here is the first step to such results. This will be the topic of future work. 

Computational observations tell us that in some cases a pair $(W,G)$ may have multiple mirrors in BHK mirror symetry.  For example, Shoemaker \cite{shoemaker} and Kelly \cite{kelly} investigated the birationality of multiple mirrors obtained via the so-called weights and groups theorem. But these do not account for all possible multiple mirrors.  Our results give a new algebraic observation about possible multiple mirrors in the Landau-Ginzburg mirror symmetry arena.

Finally, an extension to automorphisms of higher order on the geometric side is considered by Goto--Kloosterman--Yui in \cite{GKY}. It will be interesting to see if the mirror symmetry of Borcea--Voisin can be extended to such examples as well on the LG side as on the geometric side. Goto--Livn\'e--Yui \cite{GLY} also consider Borcea--Voisin Calabi--Yau threefolds of the type studied in the current article, and study how the L-series behaves under mirror symmetry. 

\section*{Acknowledgements} The authors would like to thank Alessandra Sarti, Alessandro Chiodo, Davide Veniani, Yongbin Ruan and our referee for helpful comments on earlier drafts. The second author would also like to thank the Institut f\"ur Algebraische Geometrie in the Leibniz Universit\"at Hannover, where the initial work for this publication was completed.

\section{Landau--Ginzburg models}\label{sec:LGmodels}
We begin by reviewing the constructions of the LG model for a pair $(W,G)$.

A polynomial 
$
W= \sum_{i=1}^m c_i\prod_{j = 1}^n x_j^{a_{i,j}}  \in \mathbb{C}[x_1, \ldots, x_n]
$
is called \emph{quasihomogeneous} if there exist positive weights $q_j=\tfrac{w_j}{d}$ for all  $j = 1, \ldots, n$ such that every nonzero monomial of $W$ has weighted degree one. We require $\gcd(w_1,\dots,w_n)=1$, and we call $(w_1,\dots,w_n;d)$ a \emph{weight system}.

Additionally, a polynomial $W \in \mathbb{C}[x_1, \ldots, x_n]$ is called \emph{nondegenerate} if it has an isolated singularity at the origin and the weights $q_i$ are uniquely determined. In this case we say that $W$ is \emph{admissible}. 
An admissible polynomial is called \emph{invertible} if the number of variables is equal to the number of monomials in the polynomial. 

The Landau--Ginzburg model also requires a choice of symmetry group. 
For an admissible polynomial $W$, the \emph{maximal diagonal symmetry group} $G_W^{max}$ is the group of elements of the form $g = (g_1, \ldots, g_n) \in \left(\mathbb{Q}/\ZZ\right)^n$ such that
\[
W(e^{2\pi i g_1}x_1,e^{2\pi i g_2}x_2,\ldots,e^{2\pi i g_n}x_n) = W(x_1,x_2,\ldots,x_n).
\]
Notice that if $q_1, q_2, \ldots, q_n$ are the weights of $W$, then the \emph{exponential grading operator} $\jw_W = (q_1, \ldots, q_n)$ is an element of $G_W^{max}$.  

An \emph{A--admissible symmetry group} for $W$ is a subgroup $G\subset G_W^{max}$ which contains $\jw_W$.   We denote by $J_W=\left\langle \jw_W\right\rangle$ the subgroup generated by $\jw_W$, and thus any subgroup between $J_W$ and $G_W^{max}$ is A--admissible (see \cite{Kr}). 

We can also embed $G_W^{max}$ into $\GL_n(\CC)$ via the the diagonal matrices
\[
(g_1,\dots,g_n)\mapsto 
\left(\begin{matrix}
	e^{2\pi i g_1} & & 0 \\
	& \ddots & \\
	0& & e^{2\pi i g_n}
\end{matrix}\right) 
\]
and define the group $\SL_W=G_W^{max}\cap \SL_n(\CC)$. Equivalently, for $g=(g_1,\dots,g_n)\in G^{max}_W$, we can consider $g_i$ simply as rational numbers with $0\leq g_i< 1$. We define the \emph{age} of $g$ to be 
\[
 \age g = \sum_{j=1}^n g_i. 
\]
Then $\SL_W$ is the subgroup of elements $g\in G^{max}_W$ with $\age g\in \ZZ$. A group $G\subseteq G^{max}_{W}$ is \emph{B--admissible} if $G\subset \SL_{W}$. We will generally be interested in A--admissible groups one polynomial $W$ and B--admissible groups which are subgroups of the maximal symmetry group for a different polynomial, which we will call $W^T$, which we define in the next section.

We say that a nondegenerate quasihomogeneous polynomial $W$ satisfies the Calabi--Yau condition if $\age \jw_W=1$. Notice that this condition ensures that $J_W\subset \SL_W$. The following constructions of the A-- and B--models do not require this condition. But we will require this condition beginning with Section~\ref{sec:twist}.

\subsection{Construction of A-- and B--models}
We now briefly review the construction of the A-- and B--model state spaces. Each of these yields a graded vector space with a nondegenerate pairing. The pairing will only be important in the Frobenius structure, so we leave it to Section~\ref{sec:alg_isom}.

Recall that we consider $G^{max}_W$ as a subgroup of $\GL_n(\CC)$. For $g\in G^{max}_W$, we fix the following notation:
\begin{itemize}
\item $\Fix(g)=\set{ \mb{x}\in \CC^n\mid g\mb{x}=\mb{x}}$, 
\item $W_g=W|_{\Fix(g)}$, 
\item $N_g$ the dimension of $\Fix(g)$,
\item $I_g = \{i \hspace{.1cm} |\hspace{.1cm}  g \cdot x_i = x_i\}$ 
\end{itemize}
Note that $I_g$ is the set of indices of those variables fixed by $g$, and that $N_g=|I_g|$.

\subsubsection*{The Landau--Ginzburg A--model.}\label{ss:LGA} The A--model was first constructed by Fan--Jarvis--Ruan in \cite{FJR13} following ideas of Witten, and has since come to be known as FJRW theory. For $W$ a nondegenerate quasihomogeneous polynomial and $G$ an A--admissible group, the theory yields a state space, a moduli space of so--called $W$-curves, and a virtual cycle from which one can define numerical invariants. In this work, we are primarily concerned with the state space $\sA_{W,G}$, which is defined in terms of Lefschetz thimbles.

We define $\sA_{W,G}$, as 
\begin{equation}\label{e:Aspace}
\sA_{W,G} = \bigoplus_{g \in G} H^{mid} (\Fix(g),(W)_g^{-1}( \infty))^G,
\end{equation}
where $W_g^{-1}(\infty)$ is a generic smooth fiber of the restriction of $W$ to $\Fix(g)$.

Because Lefschetz thimbles are more difficult to work with, we give an alternate defintion of the A--model state space, which will facilitate computation. 

The \emph{Milnor Ring} $\sQ_W$ of the polynomial $W$ is defined as 
\[
\sQ_W =\CC[x_1 \ldots, x_n]/
\left\langle\left\{\partial W/\partial x_i\right\}\right\rangle.
\]
If $W$ is nondegenerate, then $\sQ_W$ has finite dimension as a $\CC$-vector space. We denote $\mu_W:=\dim_\CC \sQ_W$. 

The following theorem gives a connection between the middle dimensional cohomology and the Milnor ring (also see \cite{ChIR} where the isomorphism is given canonically). 

\begin{thm}[Wall \cite{Wall1}, Chiodo--Iritani--Ruan \cite{ChIR}]\label{t:Wall}
Let $\omega = dx_1 \wedge \ldots \wedge dx_n$, then 
\[
H^{mid} (\CC^n,(W)^{-1}( \infty)) \cong  \frac{ \Omega^n}{dW \wedge\Omega^{n-1}} \cong \sQ_W\cdot \omega,
\]
and this isomorphism respects the $G_W^{max}$--action on both. 
\end{thm}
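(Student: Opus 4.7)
The second isomorphism in the statement is algebraic and essentially a bookkeeping exercise. Every $n$-form on $\CC^n$ is uniquely $h\,\omega$ with $h\in \CC[x_1,\dots,x_n]$, so the map $h\,\omega\mapsto h$ identifies $\Omega^n$ with $\CC[x_1,\dots,x_n]$. The Jacobian ideal pops out of the identity
\[
dW\wedge\big(dx_1\wedge\cdots\widehat{dx_i}\cdots\wedge dx_n\big)=(-1)^{i-1}\tfrac{\partial W}{\partial x_i}\,\omega,
\]
so the image of $dW\wedge\Omega^{n-1}$ is exactly $\langle\partial W/\partial x_i\rangle\cdot\omega$, giving $\Omega^n/(dW\wedge\Omega^{n-1})\cong \sQ_W\cdot\omega$. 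Equivariance for $G_W^{max}$ is immediate from the fact that the pullback action on forms preserves $dW$ (since $W$ is invariant) and preserves $\omega$ up to a character (the determinant).

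The first isomorphism is the topological step and I would establish it in three stages. First, I would reduce to the Milnor fiber. Since $\CC^n$ is contractible, the long exact sequence of the pair $(\CC^n,W^{-1}(\infty))$ gives $H^n(\CC^n,W^{-1}(\infty))\cong \tilde H^{n-1}(W^{-1}(\infty))$ and vanishing in other degrees. For an isolated (here quasihomogeneous) singularity, Milnor's classical theorem states that the generic fiber has the homotopy type of a wedge of $\mu_W$ copies of $S^{n-1}$, so the right-hand side is a $\mu_W$-dimensional vector space concentrated in the middle degree.

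Second, I would compute this middle cohomology algebraically. Using the fact that the $W$-gradient gives a proper map at infinity for quasihomogeneous $W$, the cohomology of the Lefschetz-thimble pair is computed by the twisted algebraic de Rham complex
\[
0\to \Omega^0\xrightarrow{dW\wedge}\Omega^1\xrightarrow{dW\wedge}\cdots\xrightarrow{dW\wedge}\Omega^n\to 0.
\]
Because $W$ has an isolated critical point, $(\partial W/\partial x_1,\dots,\partial W/\partial x_n)$ is a regular sequence in $\CC[x_1,\dots,x_n]$, so the associated Koszul complex is acyclic outside the top slot, where its cohomology is exactly $\Omega^n/(dW\wedge\Omega^{n-1})$. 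Comparing dimensions with the $\mu_W$ obtained in the first step confirms this is all of the middle cohomology. The $G_W^{max}$-action on both sides is induced by the equivariant action on $\CC^n$, and the Koszul differential $dW\wedge$ is equivariant since $dW$ is invariant, so the constructed isomorphism respects the action.

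The main obstacle is the passage from the topological pair $(\CC^n,W^{-1}(\infty))$ to the algebraic Koszul model, i.e.\ identifying the thimble cohomology with the oscillatory de Rham cohomology carried by the twisted complex above. Rather than redoing this comparison by hand, I would cite Wall \cite{Wall1} for the existence of the isomorphism and Chiodo--Iritani--Ruan \cite{ChIR} for the canonical form of the isomorphism (which is what is needed to ensure $G_W^{max}$-equivariance on the nose and not just up to choice), and devote the written proof primarily to checking that the equivariance matches under the canonical choice.
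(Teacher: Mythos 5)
The paper offers no proof of this theorem at all---it is imported as a black box from Wall and Chiodo--Iritani--Ruan---so your proposal, which works out the elementary Koszul/Jacobian-ideal identification and the Milnor-fiber dimension count explicitly but defers the genuinely hard step (matching Lefschetz-thimble cohomology with the twisted algebraic de Rham complex, canonically enough to get $G_W^{max}$-equivariance on the nose) to those same two references, is consistent with the paper's approach and is correct as far as it goes.
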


In this description of the state space, $G_W^{max}$ acts on both monomials in $\sQ_W$ as well as the volume form $\omega$.
The isomorphism in Theorem~\ref{t:Wall} certainly will hold for the restricted polynomials $W_g$ as well.  Therefore, setting $\sQ_g=\sQ_{W_g}$, we can rewrite the state space canonically as 
\begin{equation}\label{state_space}
\sA_{W,G} = \bigoplus_{g \in G} \left(\sQ_g \omega_g \right)^G,
\end{equation}
where $\omega_g = dx_{i_1} \wedge \ldots \wedge dx_{i_s}$ for $i_j \in I_g$.

\begin{notat}
The state space $\sA_{W,G}$ is a $\CC$--vector space. We can construct a basis out of elements of the form $\fjrw{m}{g}$, where $I_g = \{i_1, \ldots, i_r\}$ and $m$ is a monomial in $\CC[x_{i_1}, \ldots, x_{i_r}]$ multiplied by the volume form $\omega_g$.  We say that $\fjrw{m}{g}$ is \emph{narrow} if $ I_g = \emptyset$ (in which case $m=1$), and \emph{broad} otherwise.
\end{notat}

The A--model also has a bigrading, defined as
\begin{equation}\label{e:fjrwbidegree}
(\deg^A_+(\fjrw{m}{g}), \deg^A_-(\fjrw{m}{g})) = \left(\deg m+\age g-\age\jw_W,\ N_g-\deg m+\age g-\age\jw_W\right).
\end{equation}

\subsubsection*{The Landau--Ginzburg B--model.}
The B--model state space and its pairing are defined similarly to the A--model, with subtle but important differences. 
The input data is a pair $(W, G)$ with $W$ a nondegenerate quasihomogeneous polynomial and $G$ a B--admissible symmetry group. The state space is defined as 
\begin{equation*}\label{e:Bstatespace}
\sB_{W,G} = \bigoplus_{g \in G} \left(\sQ_g \omega_g \right)^G.
\end{equation*}

The bi-degree of $\fjrw{m}{g}$ in the B--model is 
\[
(\deg^B_+(\fjrw{m}{g}), \deg^B_-(\fjrw{m}{g})) = \left(\deg m+\age g-\age\jw_W,\ \deg m+\age g^{-1}-\age\jw_W\right).
\] 

\begin{rem}\label{r:ABiso}
As vector spaces 
we have an isomorphism $\sA_{W,G}\cong \sB_{W,G}$ (see Equation \ref{state_space}). This fact will be useful later. However, this isomorphism does not preserve the bigrading. Furthermore, both state spaces can be given the structure of $\CC$--algebras (see Section~\ref{sec:alg_isom}), and, in general, are not isomorphic as $\CC$-algebras. For more general quasihomogeneous polynomials, most symmetry groups are not both A--admissible and B--admissible as we have here, so the isomorphism just mentioned will make no sense, as the A--model only allows  A--admissible groups and likewise for the B--model. 
\end{rem}

\subsection{Berglund--Hubsch--Krawitz Mirror Symmetry}\label{sec:BHK}
There is a more interesting relationship between A-- and B--models which was first discovered by Berglund--H\"ubsch in \cite{berghub} and later refined by Berglund--Henningson in \cite{BHen} and Krawitz in \cite{Kr}. This form of mirror symmetry relates the A--model of the pair $(W,G)$ to the B--model of another pair $(W^T, G^T)$, which we will now define. We will hereafter refer to this formulation of mirror symmetry as BHK mirror symmetry.  

Suppose $W=\sum_{i=1}^n  \prod_{j=1}^n x_j^{a_{i,j}}$ is an invertible polynomial. We can suppress any coefficients via a change of variables. The \emph{mirror potential} $W^T$ is 
\[
W^T = \sum_{i=1}^n  \prod_{j=1}^n x_j^{a_{j,i}}. 
\]
In other words, if $A_W$ is the exponent matrix for $W$, then $W^T$ is the polynomial corresponding to the exponent matrix $A_W^T$. One can check that $W^T$ is also an invertible polynomial (see \cite{ChR}). 
 
If $G$ is an admissible symmetry group for a polynomial $W$, then the \emph{mirror group} $G^T$ is defined by 
\[
G^T = \set{\; g \in G_{W^T}^{max} \; | \;\;g A_{W} h^T \in \ZZ \text{ for all } h \in G \; }.
\]
\begin{rem}
This is not the original definition of $G^T$. The first definition was given by Berglund--Henningson in \cite{BHen}, and then another definition was given independently by Krawitz in \cite{Kr}. In \cite{involutions}, it is shown that their defintions agree, and it is an exercise to check that the definition given here is equivalent to the one given by Krawitz. 
\end{rem}

In \cite[Proposition 3]{involutions} we learn that $(G^T)^T = G$ and that $J_W\subset G$ if and only if $G^T\subset \SL_W$. In other words, this duality exchanges A--admissible groups for B--admissible ones. 

On the level of state spaces, we get an isomorphism between A-- and B--models given by the following theorem of Krawitz.

\begin{thm}[Krawitz \cite{Kr}]\label{t:krawitz}
Let $W$ be a non--degenerate invertible potential and $G$ an A--admissible group of diagonal symmetries of $W$.
There is a bigraded isomorphism of vector spaces
\[
\sA_{W,G} \cong \sB_{W^T ,G^T},
\]
where $\sA_{W,G}$ is the FJRW A--model of $(W, G)$ and $\sB_{W^T ,G^T}$ is the orbifold B--model of $(W^T, G^T)$.
\end{thm}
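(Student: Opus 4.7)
The plan is to construct an explicit bijection between monomial bases of the two state spaces, known as the Krawitz mirror map. First, I would use Theorem~\ref{t:Wall} to identify bases of both sides with equivalence classes of pairs $(g,m)$, where $g$ lies in the appropriate group and $m$ is a monomial representative of an element of the Milnor ring $\sQ_g$ such that $m\omega_g$ is invariant under the ambient group action ($G$ for the A--model, $G^T$ for the B--model). Since $W$ is invertible, the exponent matrix $A_W$ is invertible over $\QQ$, and this provides a natural duality between group element data and monomial exponent data: any $g=(g_1,\ldots,g_n)\in G_W^{max}$ can be written as $A_W^{-1}v \bmod \ZZ^n$ for some $v\in\ZZ^n$, while a monomial $\prod_j x_j^{m_j}$ determines, via $A_W^T$, an element of $G_{W^T}^{max}$.

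Second, I would define the Krawitz map $\Phi:\sA_{W,G}\to\sB_{W^T,G^T}$ on basis elements using this duality. Given $\fjrw{m}{g}$ with $g=A_W^{-1}v$ and $m=\prod_{i\in I_g}x_i^{m_i}$, the map $\Phi$ sends it to $\fjrw{\tilde m}{\tilde g}$, where the exponent vector of $\tilde m$ comes from the ``group part'' $v$ (restricted to indices fixed by $\tilde g$), and $\tilde g\in G_{W^T}^{max}$ is determined by applying $A_W^T$ (appropriately shifted) to the ``monomial part'' of $(m,g)$. In effect, the roles of monomial exponent data and group data are swapped using the transpose of $A_W$.

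Third, I would verify that $\Phi$ is well-defined and bijective. Well-definedness comprises three checks: (a) $\tilde g\in G^T$, which follows directly from the definition $G^T=\{h\in G_{W^T}^{max}: hA_Wk^T\in\ZZ \text{ for all } k\in G\}$ together with the $G$-invariance of $m\omega_g$; (b) $\tilde m$ is nonzero modulo the Jacobian ideal of $(W^T)_{\tilde g}$, which uses the fact that the block decomposition of an invertible polynomial forces $W_g$ and $(W^T)_{\tilde g}$ to have compatible structures; and (c) $\tilde m\omega_{\tilde g}$ is $G^T$-invariant, which is the mirror of (a). Bijectivity then follows from the involutivity $(W^T)^T=W$ and $(G^T)^T=G$ (the latter recorded in \cite[Proposition 3]{involutions}): the same construction applied in reverse yields a two-sided inverse.

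Finally, I would check that $\Phi$ preserves the bigrading. From the explicit formulas for $(\deg^A_+,\deg^A_-)$ and $(\deg^B_+,\deg^B_-)$, this reduces to showing that $\deg m+\age g=\deg\tilde m+\age\tilde g$ and that the ``minus'' directions match; here the identity $\age g+\age g^{-1}=n-N_g$, together with the swap encoded by $A_W^T$, converts the A--model formula into the B--model formula. The main obstacle is the well-definedness of $\Phi$ on broad sectors, where one must argue that the image is independent of the choice of monomial representative modulo the Jacobian ideal of $W_g$. The standard approach is to decompose $W$ into its Fermat, loop, and chain atomic types and verify the assertion case-by-case, exploiting the explicit forms of $A_W$ and $A_W^{-1}$ on each type.
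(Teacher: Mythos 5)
This theorem is not proved in the paper at all: it is imported verbatim from Krawitz \cite{Kr}, and the paper only records (in the remark immediately following) that Krawitz's proof fails when $W$ contains a chain summand of weight $\tfrac12$, a case excluded here. Your outline is a faithful reconstruction of Krawitz's actual argument --- the exponent-matrix duality swapping group data and monomial data, the three well-definedness checks, and the reduction to Fermat/loop/chain atomic types --- so it matches the cited source's approach rather than anything the paper itself does. The one point to flag is that your final ``case-by-case'' step is not merely routine: for chain polynomials with a weight-$\tfrac12$ variable the broad-sector well-definedness genuinely breaks down, which is exactly the gap the paper's remark acknowledges, so any complete write-up must either exclude that case (as this paper does) or supply a separate argument for it.
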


One may notice that Theorem~\ref{thm1} is similar to Theorem~\ref{t:krawitz}, though they differ in the choice of symmetry group. However, Theorem~\ref{t:krawitz} will be used in the proof of Theorem~\ref{thm1}.

\begin{rem}
The proof of Theorem~\ref{t:krawitz} in \cite{Kr} does not hold in case $W$ contains a summand of a so--called chain polynomial with weight $\tfrac 12$. However, such a polynomial will not appear in the current work. 
\end{rem}

\subsection{Mirror Pairs inspired by Borcea--Voisin mirror pairs}\label{ss:BVLGmodel}
Following ideas inspired by the LG--CY correspondence, we can define a Landau--Ginzburg model that will correspond to the Calabi--Yau threefolds constructed by Borcea and Voisin. 

The LG--model corresponding to Borcea--Voisin threefolds can be described in many cases as follows. Consider two invertible polynomials of the form 
\begin{equation}\label{e:wxwy}
W_1=x_0^2+f_1(x_1,\dots,x_n)\quad \text{ and }\quad W_2= y_0^2+f_2(y_1,\dots,y_m), 
\end{equation}
with weights $(u_0,\dots, u_n;d_1)$ and $(v_0,\dots,v_m;d_2)$ Notice that because of the special form of the polynomials, $d_1 = 2u_0$ and $d_2 = 2v_0$. In order to ease notation slightly, in what follows we will denote $J_i=J_{W_i}$ generated by the exponential grading operator $\jw_i=\jw_{W_i}$ for $i=1,2$.  
 
In the Borcea--Voisin construction we take the product of hypersurfaces defined by $W_1$ and $W_2$, respectively, then form the quotient by a certain involution. On the LG--side we consider the ``twisted intersection'' of these two polynomials. That is, let $W=W_1+W_2$ and let $G$ be any group satisfying $J_1\times J_2\leq G\leq \SL_{W_1}\times \SL_{W_2}$. Denote the involution 
$\sigma=(1/2,\mb{0}_x,1/2,\mb{0}_y)$, and define the group $\s{G}=G\cup \sigma G$---that is, the smallest group that contains $G$ and $\sigma$. This involution is the LG analogue of the involution in the Borcea--Voisin model. We call the LG model $(W,\s{G})$ a Borcea--Voisin Landau--Ginzburg (BVLG) model. We will justify this construction and the use of this terminology in Section~\ref{ss:GLSM}. 

Following Borcea--Voisin mirror symmetry described earlier, we call the pair $(W,\s{G})$ and $(W^T,\s{G^T})$ a BV mirror pair. As already mentioned, BV mirror symmetry does not actually match the picture given by BHK mirror symmetry, as one might expect. BHK mirror symmetry defines a group $(\s{G})^T$, which does not contain $J_1\times J_2$, but $\s G^T$ does (e.g. since $\s{G}\nsubseteq \SL_{W_1}\times \SL_{W_2}$). So the BHK mirror of a BVLG model is no longer a BVLG model.

\begin{rem}
What we call a BVLG model is slightly more general than the original Borcea--Voisin construction. If we take for $G$ a group that can be written as a product of subgroups $G=G_1\times G_2$ with $G_1\subset \SL_{W_1}$ and $ G_2\subset \SL_{W_2}$, then we recover the construction of Borcea--Voisin. If, on the other hand, one considers a group $G$ that is not such a product, the corresponding geometry is no longer a variety of Borcea--Voisin type, but rather an orbifold, whose crepant resolution is the Nakamura--Hilbert scheme of $\widetilde{\s G}$-regular orbits of $X_1\times X_2$ (see \cite{Na}), as was also mentioned in the introduction.  
\end{rem}

\section{The Twist Map}\label{sec:twist}
In this section we make some preparations needed for the proofs of Theorem~\ref{thm1}. 
From this point on, we will assume the Calabi--Yau condition on our weight systems (see Section~\ref{sec:LGmodels}). We will use ideas from the geometric model, the most important being the twist map from \cite{Borcea}. There is a similar notion on the LG side, which we will call the \emph{LG twist map}. 

Given a pair $(W,\s{G})$ as described above, namely $W=W_1+W_2$ with $W_1$ and $W_2$ of the form \eqref{e:wxwy}, and $G$ with $J_1\times J_2\subset G\subset \SL_{W_1}\times \SL_{W_2}$, we define another LG model $(\tw{W}, \tw{G})$ corresponding to the geometry of the image of the twist map (see Section~\ref{ss:twistmap} and \cite{ABS}).

We first define $\tw{W}=f_1-f_2$. If $\delta=\gcd(u_0,v_0)$, then this defines an invertible polynomial with weight system $(\tfrac{v_0u_1}{\delta},\tfrac{v_0u_2}{\delta},\dots,\tfrac{v_0u_n}{\delta},\tfrac{u_0v_1}{\delta},\dots,\tfrac{u_0v_m}{\delta};\lcm(d_1,d_2))$. We use the negative sign as a convention for clarity, in order to match the geometry. Since $f_2$ is invertible, this sign convention can be scaled away.

To see that $\tw{W}$ is quasihomogeneous, note that $\lcm(d_1,d_2)=2\lcm(u_0,v_0)$, and the same for the $\gcd$'s. Further,  $\lcm(d_1,d_2)\cdot\gcd(d_1,d_2)=d_1d_2$, so  $\delta\cdot\lcm(d_1,d_2)=\tfrac{d_1d_2}{2}$ and $v_0=\tfrac{d_1}{2}$. Hence
\[
q_{x_i}=\tfrac{u_i}{d_1}=\tfrac{v_0u_i}{\delta}\tfrac{1}{\cdot\lcm(d_1,d_2)}.
\]
Furthermore, the gcd of all of the weights is equal to 1.

To define the group $\tw G$, notice that for any $g'\in G^{\max}_{\tw{W}}$,
we can write $g'=(\alpha, \beta)$, where $\alpha$ acts on the variables $x_1, \ldots x_n$, and $\beta$ on the variables $y_1, \ldots, y_m$. There is an injective group homomorphism $\phi:G_{\tw{W}}^{\max}\hookrightarrow G_{W}^{\max}$, given by 
\[
\phi: (\alpha,\beta)\mapsto (0,\alpha, 0, \beta). 
\]
We define $\tw{G}$ to be $\phi^{-1}(\s{G})$. If $\delta=1$, then this is the group considered by Artebani--Boissi\`ere--Sarti in \cite{ABS}. Otherwise, the group is larger by a factor of $\delta$. 

The proof of Theorem~\ref{thm1} will rely heavily on the twist map, as suggested in the following diagram.

\begin{center}\begin{tabular}{ccc}
	$\sB_{\tw{(W^T)},\tw{(G^T)}}$&\begin{tikzpicture} \draw[<-] (0,0) -- (1,0); \node at (.5,.25) {$tw_B$}; \end{tikzpicture} &$\sB_{W^T, \s{G^T}}$ \\
	\begin{tikzpicture} \draw[<->] (0,0) -- (0,1); \end{tikzpicture}& & \begin{tikzpicture} \draw[<->, dashed] (0,0) -- (0,1); \node at (0,1.1) {}; \end{tikzpicture} \\
	$\sA_{\tw{W},\tw{G}}$ &\begin{tikzpicture} \draw[<-] (0,0) -- (1,0); \node at (.5,.25) {$tw_A$};\end{tikzpicture} &$\sA_{W, \s{G}}$ \\
\end{tabular}\end{center}

Now we consider the relationship between $\sA_{\tw{W},\tw{G}}$ and $\sA_{W,\s{G}}$. In \cite{Borcea, ABS} Borcea and Artebani--Boissi\`ere--Sarti have shown that when $\delta=1$ the corresponding geometric models are birational. We give a similar statement for the LG--models.

\begin{thm}\label{t:twist}
There is an isomorphism of vector spaces $tw_A:\sA_{W,\s{G}}\to \sA_{ \tw{W}, \tw{G}}$ which preserves bi-degrees. 
\end{thm}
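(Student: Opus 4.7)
The plan is to construct $tw_A$ as a direct sum of explicit bijections on the $\s{G}$-summands, indexed by $\tw{G}$. First I would observe that $\sigma$-invariance severely restricts which $g \in \s{G}$ contribute: writing $g = (g_{x_0}, \alpha, g_{y_0}, \beta)$, the Milnor ring $\sQ_g$ sets $x_0$ (resp.\ $y_0$) to zero whenever it is fixed by $g$, because of the Morse summand $x_0^2$ (resp.\ $y_0^2$) in $W_g$, while $\sigma$ multiplies each of $dx_0, dy_0$ appearing in $\omega_g$ by $-1$. Hence $\sigma$ acts on $\sQ_g \omega_g$ as $(-1)^{[x_0 \in I_g]+[y_0 \in I_g]}$, so only sectors with $g_{x_0} = g_{y_0}$ survive. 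These come in pairs $g_A = (0,\alpha,0,\beta)$ and $g_B = \sigma g_A = (1/2,\alpha,1/2,\beta)$ parametrized by $(\alpha,\beta) \in \tw{G}$, using $\tw{G} = \phi^{-1}(\s{G})$.

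Next I would identify the three relevant Milnor rings: $\sQ_{g_A}$ and $\sQ_{g_B}$ both equal $\sQ_{f_1|_\alpha+f_2|_\beta}$ (the Morse directions contribute nothing to $\sQ_{g_A}$), while $\sQ_{\tilde{g}} \cong \sQ_{f_1|_\alpha-f_2|_\beta}$ agrees with the others after a diagonal rescaling $y_j \mapsto \zeta_j y_j$ by appropriate roots of unity. The volume forms satisfy $\omega_{g_A} = dx_0 \wedge \omega_{g_B} \wedge dy_0$, so the $\s{G}$-action on $\omega_{g_A}$ differs from that on $\omega_{g_B}$ by the character $\psi(h) := e^{2\pi i (h_{x_0}+h_{y_0})}$. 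The $\SL$-condition on $G \subset \SL_{W_1} \times \SL_{W_2}$ rewrites this as $\psi(h) = e^{-2\pi i(\age \alpha_h + \age \beta_h)}$, which factors through the projection $\pi : G \to G_{\tw{W}}^{max}$, $h \mapsto (\alpha_h,\beta_h)$.

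The crux of the argument is to verify that $\tw{G} = \ker(\psi|_{\pi(G)})$. The $\SL$-condition forces $\age \alpha_h, \age \beta_h \in \{0, 1/2\} \pmod{\ZZ}$ for each $h \in G$, and $\pi(h) \in \tw{G}$ precisely when these ages are both integers or both half-integers, exactly the condition $\psi(h)=1$. Consequently $\pi(G)/\tw{G}$ is trivial or $\ZZ/2$, and the $\tw{G}$-invariants of $\sQ_{\tilde{g}}\omega_{\tilde{g}}$ decompose as the two eigenspaces under $\psi$: the $+1$-eigenspace is identified with $(\sQ_{g_B}\omega_{g_B})^{\s{G}}$ and the $-1$-eigenspace with $(\sQ_{g_A}\omega_{g_A})^{\s{G}}$, via the canonical Milnor ring isomorphism (the character twist in the Case A identification being absorbed by the extra $dx_0 \wedge dy_0$ in $\omega_{g_A}$). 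Assembling these identifications across all $\tilde{g} \in \tw{G}$ defines $tw_A$.

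Bidegree preservation is checked by substituting $\age g_A = \age \tilde{g}$, $\age g_B = \age \tilde{g}+1$, $N_{g_A} = N_{\tilde{g}}+2$, $N_{g_B} = N_{\tilde{g}}$, and $\age \jw_W - \age \jw_{\tw{W}} = 1$ into the paper's bigrading formulas. The $g_B$-contributions match the $\tw{W}$-side bidegrees on the nose, while for the $g_A$-contributions the $+2$ in $N_{g_A}$ and the $+1$ jump in $\age\jw_W$ cancel the Morse suspension shift $(-1,+1)$ arising from the extra $dx_0, dy_0$ in $\omega_{g_A}$, leaving the same bidegree as for $\fjrw{m}{\tilde{g}}$. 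The principal obstacle is the identification $\tw{G} = \ker(\psi|_{\pi(G)})$, since it is what pairs the Case A/B dichotomy on the BVLG side with the $\psi$-eigenspace decomposition on the twist side; once this is in hand, the rest of the argument is routine bookkeeping.
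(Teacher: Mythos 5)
Your proposal is correct, and it is essentially the paper's proof in more structural clothing: your $\sigma$-parity observation is the paper's Lemma~\ref{l:allornothing}; the paper's surjectivity argument, which splits into cases according to whether $\jw_x$ fixes $p'$ or acts with weight $1/2$, is exactly your eigenspace decomposition of $(\sQ_{\tilde g}\omega_{\tilde g})^{\tw{G}}$ under $\pi(G)/\tw{G}$; and its injectivity check (that exactly one of $\fjrw{p}{g}$ and $\fjrw{\tfrac{p}{dx_0dy_0}}{\sigma g}$ is $\s{G}$-invariant) is the disjointness of your two eigenspaces. The computations identifying $\tw{G}$ with $\ker(\psi|_{\pi(G)})$ via the $\SL$-condition, and the bidegree bookkeeping, match the paper's manipulations with $\jw_1+h$ and its two displayed degree calculations.

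One point needs tightening. You allow $\pi(G)/\tw{G}$ to be trivial, but in that case your identification of the $(-1)$-eigenspace with $(\sQ_{g_A}\omega_{g_A})^{\s{G}}$ would fail: both the $g_A$- and $g_B$-sectors would then carry the full space of $\tw{G}$-invariants, and the dimensions would come out doubled. You must observe that the quotient is always $\ZZ/2$. This follows because $\jw_1=(1/2,\jw_x,0,\mathbf{0})$ lies in $J_1\times J_2\subseteq G$ and has $\psi(\jw_1)=e^{2\pi i(1/2+0)}=-1$, so $\psi$ is never trivial on $G$; this is precisely the role $\jw_1$ plays in the paper's surjectivity and injectivity arguments. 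With that one-line addition your argument is complete.
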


This theorem is proved by giving a bijection on a basis of both spaces, and showing it preserves bi-degrees.  
We first prove the following preparatory lemma. 

\begin{lem}\label{l:allornothing}
If $(\sQ_{g})^{\s{G}}$ is a non--empty sector of $\sA_{W,\s{G}}$, then $g$ either fixes both of $x_0$ and $y_0$ or neither. 
\end{lem}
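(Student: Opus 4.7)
The plan is to prove the contrapositive: if $g \in \s{G}$ fixes exactly one of $x_0, y_0$, then the $g$-sector $(\sQ_g \omega_g)^{\s{G}}$ is zero. Since $\s{G}$ sits inside the diagonal symmetry group of $W$, it is abelian, so every element commutes with $g$, preserves $\Fix(g)$, and therefore acts on $\sQ_g \omega_g$. In particular any invariant vector must be fixed by $\sigma$, and the strategy is to show that $\sigma$ already acts by $-1$ on the whole sector.

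Without loss of generality I would assume $g$ fixes $x_0$ but not $y_0$, so $x_0 \in I_g$ and $y_0 \notin I_g$. The central observation is that $W_g$ retains the quadratic summand $x_0^2$ (since $x_0$ survives the restriction to $\Fix(g)$) but loses every term containing $y_0$. Consequently $\partial W_g/\partial x_0 = 2x_0$ lies in the Jacobian ideal, so $x_0 \equiv 0$ in $\sQ_g$. Combined with the fact that $y_0$ is simply not a variable of $\sQ_g$, this means every class in $\sQ_g$ has a representative $m$ involving neither $x_0$ nor $y_0$.

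The final step is to compute the $\sigma$-action on $m \,\omega_g$. Because $\sigma = (1/2, \zero_x, 1/2, \zero_y)$ is nontrivial only on $x_0$ and $y_0$, it acts trivially on $m$. On the volume form, $\sigma$ sends $dx_0 \mapsto -dx_0$, while every other differential appearing in $\omega_g$ is fixed (crucially, $dy_0$ does not occur, since $0 \notin I_g^y$). Hence $\sigma \cdot (m\,\omega_g) = -m\,\omega_g$, which forces $m\,\omega_g = 0$ whenever it is $\s{G}$-invariant. The symmetric argument exchanging the roles of $x_0$ and $y_0$ handles the other case. I do not anticipate any serious obstacle here; the only care needed is in the bookkeeping of which variables lie in $I_g$ and which survive in $\sQ_g$, the underlying mechanism being the standard fact that a quadratic summand $z^2$ kills $z$ in the Milnor ring.
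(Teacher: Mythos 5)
Your proof is correct and follows essentially the same route as the paper: both arguments show that $\sigma$ acts by $-1$ on the entire $g$-sector because the sign from the surviving differential ($dx_0$ or $dy_0$) in $\omega_g$ is not cancelled by the monomial part. The only difference is that you spell out explicitly the step the paper leaves implicit, namely that the quadratic summand forces $x_0\equiv 0$ (resp.\ $y_0\equiv 0$) in the Milnor ring so that $\sigma$ acts trivially on the monomial representative.
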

\begin{proof}
Suppose $g$ fixes either $x_0$ or $y_0$ but not both. Then $g$ has the form $g=(1/2,\alpha, 0,\beta)$ or $g=(0,\alpha, 1/2, \beta)$. In either case the volume form $\omega_g$ includes $dx_0$ or $dy_0$, but not both. Then $\sigma$ acts on any element of the Milnor ring $\sQ_{W_g}$ with weight $1/2$. Therefore there are no invariants. 
\end{proof}

\begin{proof}[Proof of Theorem~\ref{t:twist}] 
Consider $\fjrw{p}{g}\in \sA_{W,\s{G}}$. By Lemma~\ref{l:allornothing}, we can write $g=(\epsilon/2, \alpha, \epsilon/2, \beta)$, $\epsilon\in \set{0,1}$, and $p=dx_0^{1-\epsilon} dy_0^{1-\epsilon}\prod x_j^{a_j}dx_j \prod y_j^{b_j}dy_j$. Here the products are taken over $I_\alpha$ and $I_\beta$, resp. (we can regard $\alpha$ and $\beta$ as symmetries of $f_1$ and $f_2$, resp.). We define the map via
\[
tw_A(\fjrw{p}{g})=\fjrw{p'}{g'}, 
\]
where 
\[
p'=2^{(1-\epsilon)}\prod_{j\in I_\alpha} x_j^{a_j}dx_j \prod_{j\in I_\beta} y_j^{b_j}dy_j,
\] 
and $g'=(\alpha,\beta)$. The factor of 2 in the definition of the twist map exists to preserve the pairing, which we will return to in Section~\ref{sec:alg_isom}. We check that this map is well defined, injective and surjective, and that it preserves the bigradings. 

To see that the map is well defined, we check that $g'\in \tw{G}$, and that $p'$ is fixed by $\tw{G}$. For the first, notice that either $g=(0,\alpha,0,\beta)$ or $\sigma g=(0,\alpha,0,\beta)$. Since $(0,\alpha,0,\beta)\in \s{G}\cap \phi (G^{\max}_{\tw{W}})$ we see that $g'\in \tw{G}$. For the second, note that $p'$ is fixed by $\tw{G}$ because $\phi$ restricts to an injection $\phi:\tw{G}\hookrightarrow \s{G}$, and $g'$ acts on $p'$ exactly the same as $\phi(g')\in \s{G}$ acts on $p$. 

As for the surjectivity, in the notation already in use, suppose $\fjrw{p'}{g'}\in \sA_{\tw{W},\tw{G}}$. Let $p_x=\prod x_j^{a_j}dx_j$ and $p_y=\prod y_j^{b_j}dy_j$ (so, $p'= 2^{1-\epsilon}p_xp_y$). Again, the products are taken only over the respective sets of fixed variables. Let $\jw_{1}$, $\jw_{2}$ and $\tw{\jw}$ be the exponential grading operators for $W_1$, $W_2$, and $\tw{W}$, resp. When there is no danger of confusion, we will use the same notation for the elements in $\tw{G}$ and $\phi(\tw{G})$. For example, if $\tw{\jw}=(\jw_x,\jw_y)$, then we may also write $\tw{\jw}=(0,\jw_x,0,\jw_y)$. Notice that $\jw_x$ and $\jw_y$ are the exponential grading operators for $f_1$ and $f_2$, resp. The context should make clear which group is meant. Notice we also have $\jw_{1}=(1/2,\jw_x,0,0)$, and $\jw_{2}=(0,0,1/2,\jw_y)$.  

In the notation just described, $2\jw_{1}=(0,2\jw_x,0,0)$, and so $(2\jw_x,0)\in \tw{G}$. Since $p'$ is fixed by $\tw{G}$, it is fixed by $(2\jw_x,0)$, which acts trivially on the $y$ variables, and so $(2\jw_x,0)$ fixes $p_x$. 
Hence $\jw_x$ either fixes or acts with weight $1/2$ on $p_x$. The same for $\jw_y$ on $p_y$. And since ${\tw{\jw}}$ fixes $p'$, the action is the same for both. 

If $\jw_x$ fixes $p'$, then the preimage of $\fjrw{p'}{g'}$ is $\fjrw{p}{g}$ with $g=(1/2,\alpha,1/2,\beta)$ and $$p=\prod x_j^{a_j}dx_j \prod y_j^{b_j}dy_j = p_x p_y.$$ Notice that the action of any $(\gamma_x, \gamma_y) \in \tw{G}$ on $p'$ is the same as the action of $(\alpha_0,\gamma_x, \beta_0, \gamma_y)\in \s{G}$ on $p$ for any $\alpha_0=\beta_0$. There are also elements of $\s{G}$ of the form $(1/2,\alpha,0,\beta)$ or $(0,\alpha,1/2,\beta)$, with $(\alpha,\beta)\notin \tw{G}$. Since the first is simply $\sigma$ times the second, we can focus only on the first. In this case, we can multiply by $\jw_{1}$ and we see that $(\jw_x\alpha,\beta) \in \tw{G}$. Since $\jw_x$ fixes $p'$, we see that $(1/2,\alpha,0,\beta)$ must fix $p$. Thus, $p$ is fixed by $\s{G}$ and $\fjrw{p}{g}$  is indeed an element of $\sA_{W,\s{G}}$.

If on the other hand, $\jw_x$ acts with weight $1/2$ on $p_x$, then the preimage of $\fjrw{p'}{g'}$ is $\fjrw{p}{g}$ with $g=(0,\alpha,0,\beta)$ and $p=dx_0 dy_0\prod x_j^{a_j}dx_j \prod y_j^{b_j}dy_j=dx_0 dy_0p_xp_y$. Indeed, it is clear that $g\in \s{G}$, we need only check that $p$ is fixed by $\s{G}$. Because $p'$ is fixed by $\tw{G}$, we see that $p$ is fixed by all elements $h$ of $\s{G}$ which have the form $(0,h_x,0,h_y)$ or $(1/2,h_x,1/2,h_y)$. We need only further consider the case when $h=(1/2,\eta_x,0,\eta_y)\in \s{G}$. Then we have $\jw_{1}+h=(0,\jw_x+\eta_x,0,\eta_y)\in \s{G}$, and therefore $\jw_{1}+h=(\jw_x+\eta_x,\eta_y)\in \tw{G}$ fixes $p'$. The action of $\jw_x$ on $p'$ has weight $1/2$, by assumption, which means the action of $(\eta_x, \eta_y)$ on $p'$ must also have weight $1/2$. Because $h$ acting on $dx_0$ contributes another $1/2$, we see that $h$ fixes $p$. 

For the injectivity of the twist map, we need to show that if $\fjrw{p}{g}\in \sA_{W,\s{G}}$ with $g=(0, \alpha, 0, \beta)$, then $\fjrw{\tfrac{p}{dx_0 dy_0}}{\sigma g}\notin \sA_{W,\s{G}}$, and vice--versa. This follows by similar reasoning. For example, if $\jw_{1}$ fixes $p$, then it acts on $\tfrac{p}{dx_0 dy_0}$ with weight $1/2$. Similarly, if  $\jw_{1}$ fixes $\tfrac{p}{dx_0 dy_0}$, then it acts on $p$ with weight $1/2$. Thus both cannot belong to $\sA_{W,\s{G}}$. 

Now we show that this map preserves degrees.  We will consider the bigradings of the elements $\fjrw{p}{g}$ with $g=(\epsilon/2,\alpha,\epsilon/2,\beta)$ and $\fjrw{p'}{g'}$ with $g=(\alpha, \beta)$. And indeed,  
\begin{align*}
\deg^A_+(\fjrw{p}{g}) &= \deg p + \age g - 2\\  
    &= (\tfrac{1-\epsilon}{2}+\tfrac{1-\epsilon}{2})+\deg p'+\tfrac{\epsilon}{2}+\tfrac{\epsilon}{2}+\age g'-2 \\
    &= \deg p'+\age g'-1\\
    &= \deg^A_+(\fjrw{p'}{g'})
\end{align*}
and since for $g=(\epsilon/2,\alpha,\epsilon/2,\beta)$, we have $N_g=2(1-\epsilon)+N_{g'}$, we also have 
\begin{align*}
\deg^A_-(\fjrw{p}{g}) &= N_g-\deg p + \age g -2\\ 
    &= (2(1-\epsilon)+ N_{g'})-(\tfrac{1-\epsilon}{2}+\tfrac{1-\epsilon}{2}+\deg p')+(\tfrac{\epsilon}{2}+\tfrac{\epsilon}{2}+\age g')-2 \\
    &= N_{g'}-\deg p'+\age g'-1\\
    &= \deg^A_-(\fjrw{p'}{g'})
\end{align*}

Thus the bidegrees are preserved, and this concludes the proof. 
\end{proof}

\begin{cor}\label{c:Btwistiso}
There is an isomorphism of vector spaces $tw_B:\sB_{W,\s{G}}\to \sB_{\tw{W}, \tw{G}}$ which preserves the bi-degrees. 
\end{cor}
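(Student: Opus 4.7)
The plan is to reuse the bijection $tw_A$ constructed in the proof of Theorem~\ref{t:twist} essentially verbatim. By Remark~\ref{r:ABiso}, the A-- and B--model state spaces of any pair $(W,G)$ share the same underlying vector space with the same direct sum decomposition indexed by group elements; only the bigrading differs. Before invoking this observation, I would first check that the relevant state spaces are defined as B--models: since $G\subseteq \SL_{W_1}\times \SL_{W_2}\subseteq \SL_W$ and $\sigma=(1/2,\mb{0},1/2,\mb{0})$ has age $1$, we get $\s{G}\subseteq \SL_W$; and since $\tw{G}=\phi^{-1}(\s{G})$ with $\phi$ preserving age componentwise, we get $\tw{G}\subseteq \SL_{\tw{W}}$. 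Defining $tw_B$ to be the bijection $tw_A$ viewed on these underlying vector spaces then makes sense, and by Theorem~\ref{t:twist} it is already known to be a vector space isomorphism.

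The only substantive work is verifying that the B--bidegree is preserved. The first component $\deg^B_+$ is identical to $\deg^A_+$, so invariance under $tw_B$ is already contained in the proof of Theorem~\ref{t:twist}. For the second component, recall $\deg^B_-(\fjrw{p}{g})=\deg p+\age g^{-1}-\age\jw_W$. The key observation is that for $g=(\epsilon/2,\alpha,\epsilon/2,\beta)\in\s{G}$ with $\epsilon\in\{0,1\}$, the congruence $-\epsilon/2\equiv \epsilon/2\pmod 1$ gives $g^{-1}=(\epsilon/2,-\alpha,\epsilon/2,-\beta)$, so $\age g^{-1}=\epsilon+\age (g')^{-1}$ where $g'=(\alpha,\beta)\in\tw{G}$. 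Combining this with $\deg p=(1-\epsilon)+\deg p'$ (using $q_{x_0}=q_{y_0}=1/2$), $\age\jw_W=2$, and $\age\jw_{\tw{W}}=1$ (which follows from a direct check using $\lcm(d_1,d_2)\cdot\gcd(d_1,d_2)=d_1d_2=4u_0v_0$), the contributions $(1-\epsilon)$ and $\epsilon$ sum to $1$ and absorb exactly the difference in the $\jw$ corrections, yielding $\deg^B_-(\fjrw{p}{g})=\deg^B_-(\fjrw{p'}{g'})$.

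I expect no serious obstacle. The only mild subtlety is conceptual: in the A--model the involution contributes to $\deg^A_-$ through the fixed--locus dimension $N_g$, whereas in the B--model it contributes through $\age g^{-1}$. The reason the bookkeeping still works out cleanly is the fortunate coincidence $-1/2\equiv 1/2\pmod 1$, which makes the $\sigma$--part of $g$ and of $g^{-1}$ agree. Once this is noted, Lemma~\ref{l:allornothing} and the well--definedness, surjectivity, and injectivity arguments from the proof of Theorem~\ref{t:twist} transfer verbatim, and the corollary follows.
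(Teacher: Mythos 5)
Your proposal is correct and follows essentially the same route as the paper: define $tw_B$ to be $tw_A$ under the vector-space identification of Remark~\ref{r:ABiso}, note that $\deg^B_+=\deg^A_+$ is already handled, and verify $\deg^B_-$ via $\age g^{-1}=\epsilon+\age (g')^{-1}$, which is exactly the paper's computation (your observation that $-\epsilon/2\equiv\epsilon/2\pmod 1$ is the same fact the paper uses when writing $g^{-1}=(\epsilon/2,\alpha^{-1},\epsilon/2,\beta^{-1})$). The only additions beyond the paper's argument are the explicit checks that $\s{G}$ and $\tw{G}$ are B--admissible, which are harmless and correct.
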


\begin{proof}
This isomorphism $tw_B$ is defined as in Theorem~\ref{t:twist}. Recall that $\sB_{W,G}\cong \sA_{W,G}$ as vector spaces 
(see Remark \ref{r:ABiso}). The only thing that remains to check is the bi--degree. 

For $g=(\epsilon/2,\alpha,\epsilon/2,\beta)$, 
\begin{align*}
\deg^B_+(\fjrw{p}{g}) &= \deg p + \age g - 2\\  
    &= (\tfrac{1-\epsilon}{2}+\tfrac{1-\epsilon}{2})+\deg p'+\tfrac{\epsilon}{2}+\tfrac{\epsilon}{2}+\age g'-2 \\
    &= \deg p'+\age g'-1\\
    &= \deg^B_+(\fjrw{p'}{g'})
\end{align*}
and since $g^{-1}=(\epsilon/2, \alpha^{-1}, \epsilon/2,\beta^{-1})$, $(g')^{-1}=(\alpha^{-1},\beta^{-1})$ we have $\age g= \epsilon + \age (g')^{-1}$. Thus 
\begin{align*}
\deg^B_-(\fjrw{p}{g}) &= \deg p + \age g^{-1} -2\\ 
    &= (\tfrac{1-\epsilon}{2}+\tfrac{1-\epsilon}{2}+\deg p')+(\epsilon+\age (g')^{-1})-2 \\
    &= \deg p'+\age (g')^{-1}-1\\
    &= \deg^B_-(\fjrw{p'}{g'})
\end{align*}
\end{proof}

Now we have two isomorphisms $tw_A:\sA_{W,\s G}\to \sA_{\tw W,\tw G}$ and $tw_B:\sB_{W,\s G}\to \sB_{\tw W,\tw G}$ for A--models and 
B--models, resp. We will use these isomorphisms to prove Theorem~\ref{thm1}. 


\subsection{State Space isomorphism}\label{s:twistedstatespaceisom}
We now have all of the necessary tools to prove the main theorem, 
which we restate here:
\begin{customthm}{1}
There is an isomorphism of vector spaces (the mirror map) $\sA_{W,\s{G}}\cong \sB_{W^T,\s{G^T}}$, which preserves the bi--degree. 
\end{customthm} 

\begin{proof}
Recall the diagram 

\begin{center}\begin{tabular}{ccc}
$\sB_{\tw{(W^T)},\tw{(G^T)}}$&\begin{tikzpicture} \draw[<-] (0,0) -- (1,0); \node at (.5,.25) {$tw_B$}; \end{tikzpicture} &$\sB_{W^T, \s{G^T}}$ \\
\begin{tikzpicture} \draw[<->] (0,0) -- (0,1); \end{tikzpicture}& & \begin{tikzpicture} \draw[<->, dashed] (0,0) -- (0,1); \node at (0,1.1) {}; \end{tikzpicture} \\
$\sA_{\tw{W},\tw{G}}$ &\begin{tikzpicture} \draw[<-] (0,0) -- (1,0); \node at (.5,.25) {$tw_A$};\end{tikzpicture} &$\sA_{W, \s{G}}$ \\
\end{tabular}\end{center}

The two horizontal arrows are the content of Theorem~\ref{t:twist} and Corollary~\ref{c:Btwistiso}. The dashed arrow is the desired isomorphism of Theorem~\ref{thm1}. We will establish this map by providing the leftmost vertical arrow in the diagram. This will follow from Theorem~\ref{t:krawitz} (proven by Krawitz in \cite{Kr}). The only difference is the group of symmetries in the upper left corner is $\tw{(G^T)}$, instead of $\tw{(G)}^T$. Therefore all that remains is to show $\tw{(G^T)} = (\tw{G})^T$, which we do in the following lemma. 
\end{proof}

\begin{lem}
$\tw{(G^T)} = (\tw{G})^T$. 
\end{lem}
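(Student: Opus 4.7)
The plan is to establish the equality by a double inclusion, viewing both groups as subgroups of the common ambient $G^{max}_{(\tw{W})^T}$ (noting that $(\tw{W})^T$ and $\tw{(W^T)}$ coincide as exponent matrices, the sign on $f_2$ being immaterial to the BHK construction). The key computational fact is that $A_W$ is block diagonal with diagonal blocks $[2], A_{f_1}, [2], A_{f_2}$, so that for any $g' = (\epsilon', \alpha', \epsilon'', \beta')$ and $h = (a_0, \alpha, b_0, \beta)$ in $G^{max}_W$ with $\epsilon', \epsilon'', a_0, b_0 \in \{0, \tfrac12\}$,
\[
g' A_W h^T \;=\; 2\epsilon' a_0 \,+\, \alpha' A_{f_1} \alpha^T \,+\, 2\epsilon'' b_0 \,+\, \beta' A_{f_2} \beta^T,
\]
with the cross terms $2\epsilon' a_0$ and $2\epsilon'' b_0$ each equal to $0$ or $\tfrac12$ modulo $1$.

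For the forward inclusion $\tw{(G^T)} \subseteq (\tw{G})^T$, I would take $(\alpha', \beta') \in \tw{(G^T)}$, so that at least one of $(0, \alpha', 0, \beta')$ or $(1/2, \alpha', 1/2, \beta')$ lies in $G^T$; likewise any $(\alpha, \beta) \in \tw{G}$ arises from $(0, \alpha, 0, \beta)$ or $(1/2, \alpha, 1/2, \beta)$ in $G$. In each of the four resulting combinations, the cross terms either vanish or sum to an integer, so the BHK condition $g' A_W h^T \in \ZZ$ becomes equivalent to $(\alpha', \beta') A_{\tw{W}} (\alpha, \beta)^T \in \ZZ$, which is exactly the membership condition for $(\tw{G})^T$.

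The reverse inclusion $(\tw{G})^T \subseteq \tw{(G^T)}$ is the more delicate step: given $(\alpha', \beta') \in (\tw{G})^T$, I must decide which of $(0, \alpha', 0, \beta')$ or $(1/2, \alpha', 1/2, \beta')$ actually lies in $G^T$. To resolve this I would introduce two group homomorphisms $\eta, \psi : G \to \QQ/\ZZ$ defined on $h = (a_0, \alpha, b_0, \beta) \in G$ by
\[
\eta(h) = a_0 - b_0 \equiv a_0 + b_0 \pmod 1, \qquad \psi(h) = \alpha' A_{f_1} \alpha^T + \beta' A_{f_2} \beta^T.
\]
For $h \in \ker\eta$, either $h$ or $\sigma h$ equals $(0, \alpha, 0, \beta) \in \s{G}$, so $(\alpha, \beta) \in \tw{G}$ and $\psi(h) \in \ZZ$ by hypothesis. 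Hence $\psi$ factors through $G / \ker\eta$, a cyclic group of order at most two. If the induced map is trivial, then $(0, \alpha', 0, \beta')$ already lies in $G^T$; otherwise it must coincide with the induced $\eta$, forcing $(1/2, \alpha', 1/2, \beta') A_W h^T = a_0 + b_0 + \psi(h) \equiv 2(a_0+b_0) \equiv 0 \pmod 1$ for every $h \in G$, so that $(1/2, \alpha', 1/2, \beta') \in G^T$. Either way $(0, \alpha', 0, \beta') \in \s{G^T} = G^T \cup \sigma G^T$, giving $(\alpha', \beta') \in \tw{(G^T)}$. The main obstacle is precisely this choice-of-coset problem in the reverse direction, which the $\eta, \psi$ construction resolves by reducing the question to the classification of homomorphisms from a cyclic group of order at most two into $\QQ/\ZZ$.
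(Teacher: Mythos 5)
Your proof is correct. The forward inclusion is the same as the paper's: both reduce to the observation that the $x_0$- and $y_0$-cross-terms of $g' A_W h^T$ sum to an integer whenever $g'$ and $h$ are taken in the diagonal forms $(\epsilon/2,\alpha',\epsilon/2,\beta')$ and $(\epsilon'/2,\alpha,\epsilon'/2,\beta)$. The reverse inclusion --- the genuinely delicate step, as you note --- is where you diverge. The paper splits into the cases $\age\alpha'\equiv 0$ and $\age\alpha'\equiv 1/2 \pmod\ZZ$ and, for each off-diagonal $h\in G$, translates $h$ by the explicit element $\jw_1$ so that it lands in $\phi(\tw{G})$, peeling off the correction term via the quasihomogeneity identity $\alpha' A_{f_1}\jw_x^T=\age\alpha'$. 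Your $\eta/\psi$ device is the structural form of the same dichotomy: $\ker\eta$ consists of the paper's diagonal elements, the class of $\jw_1$ generates $G/\ker\eta$, and $\psi(\jw_1)=\alpha' A_{f_1}\jw_x^T=\age\alpha'$, so your alternative ``$\bar\psi$ trivial versus $\bar\psi=\bar\eta$'' is exactly the paper's case split on $\age\alpha'$. What your packaging buys is uniformity: you never single out $\jw_1$ or $\jw_2$, never need $A_{f_1}\jw_x^T=\mathbf 1$, and you avoid the paper's separate treatment of $h=(1/2,\gamma_x,0,\gamma_y)$ versus $h=(0,\gamma_x,1/2,\gamma_y)$, since the whole coset ambiguity is discharged by classifying homomorphisms $\ZZ/2\to\QQ/\ZZ$. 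What the paper's version buys is a concrete, computable criterion ($\age\alpha'\bmod\ZZ$) for which of the two lifts of $g$ actually lies in $G^T$.
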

\begin{proof}
We first consider the inclusion $\tw{(G^T)}\subset (\tw{G})^T$. Recall $\tw{(G^T)}=(\phi^T)^{-1}(\s{G^T})$. Consider $g=(\alpha, \beta)\in \tw{(G^T)}$ and $g_\epsilon=((1/2)^{\epsilon},\alpha,(1/2)^{\epsilon},\beta)$ for $\epsilon\in\set{0,1}$. By definition, $g_0\in G^T$ or $g_1\in G^T$. 

We want to show that $g\in (\tw{G})^T$, which means we need to show that for each $h=(\gamma_x,\gamma_y)\in \tw{G}$, $gA_{\tw{W}}h\in \ZZ$. Again, $h\in \tw{G}$ means that $h_{\epsilon'}=((1/2)^{\epsilon'},\gamma_x,(1/2)^{\epsilon'},\gamma_y)\in G$ for either $\epsilon'=0$ or $\epsilon'=1$.

Let $A_1$ and $A_2$ be the exponent matrices for $f_1$ and $f_2$, resp. Then for $\epsilon = 0$ or 1, we know that 
\[
g_\epsilon A_{W}h_{\epsilon'}= 4(1/2)^{\epsilon+\epsilon'}+\alpha A_{1}\gamma_x+\beta A_{2}\gamma_y \in \ZZ.
\]
Since the first summand is an integer, we have $gA_{\tw{W}}h=\alpha A_{1}\gamma_x+\beta A_{2}\gamma_y \in \ZZ$. 

Now for the reverse inclusion $(\tw{G})^T\subset \tw{(G^T)}$. Suppose $g=(\alpha,\beta)\in (\tw{G})^T$, and again let $g_\epsilon= ((1/2)^{\epsilon},\alpha,(1/2)^{\epsilon},\beta)$. We need to show that either $g_0\in G^T$ or $g_1\in G^T$. In other words we need to show that $g_0A_{W}h\equiv 0\pmod \ZZ$ for every $h\in G$, or that $g_1A_{W}h\equiv 0\pmod \ZZ$ for every $h\in G$. For $h$ of the form $((1/2)^{\epsilon'},\gamma_x,(1/2)^{\epsilon'},\gamma_y)$, the previous calculation shows that this is indeed the case for both $g_0$ and $g_1$. The difficulty comes when $h$ is of the form $((1/2)^{\epsilon'},\gamma_x,(1/2)^{1-\epsilon'},\gamma_y)$. We consider two cases. 

First suppose $\age \alpha\equiv 0 \pmod \ZZ$. Then since $\tw{(G^T)}\subset (\tw{G})^T\subset \SL_{\tw{W}}$, $\age \beta\equiv 0 \pmod \ZZ$. We will show that, in fact, $g_0\in G^T$. Notice that $\alpha A_1\jw_x=\age \alpha\equiv0\pmod \ZZ$. Suppose first that, $h=((1/2),\gamma_x,0,\gamma_y) \in G$. Then $\jw_{1}+h\in \phi(\tw{G})\cap G$, and so 
\[
g_0 A_W (\jw_{1}+h)=\alpha A_1 \jw_x + \alpha A_1 \gamma_x + \beta A_2 \gamma_y = g A_{\tw{W}} \phi^{-1}(\jw_1 + h) \equiv 0\pmod \ZZ,
\]
since $\phi^{-1}(\jw_1 + h) \in \tw{G}$ and, by assumption,  $g \in (\tw{G})^T$.
Since also 
\[
g_0 A_W (\jw_{1}+h) = \age \alpha + g_0 A_W h,
\]
we have
$
g_0 A_W h \equiv 0\pmod \ZZ.
$
If $h=(0,\gamma_x,1/2,\gamma_y)$, the proof is the same, only using $\jw_{2}$ instead. 

Because $G\subset\SL_{W_1}\times\SL_{W_2}$, the only other case is $\age \alpha\equiv 1/2 \pmod\ZZ$. In this case $\age \beta\equiv 1/2 \pmod\ZZ$. We will show that, in fact, $g_1\in G^T$. The proof is similar, but this time, we notice that $\alpha A_1\jw_x$ is no longer an integer, but a half--integer. Again we first suppose that $h=((1/2),\gamma_x,0,\gamma_y)$. Then $\jw_{1}+h\in \phi(\tw{G})$, and so 
\[
g_1 A_W (\jw_{1}+h)=\alpha A_1 \jw_x + \alpha A_1 \gamma_x + \beta A_2 \gamma_y= g A_{\tw{W}} \phi^{-1}(\jw_1 + h) \equiv 0\pmod \ZZ
\]
since $\phi^{-1}(\jw_1 + h) \in \tw{G}$ and, by assumption,  $g \in (\tw{G})^T$. 
Since also 
\[
g_1 A_W (\jw_{1}+h) = \frac{1}{2}+\age \alpha + g_0 A_W h,
\]
we have $g_0 A_W h \equiv 0\pmod \ZZ$.

If $h=(0,\gamma_x,1/2,\gamma_y)$, the proof is the same, only using $\jw_{2}$ instead.
\end{proof}


This concludes the proof of Theorem~\ref{thm1}. 

\begin{rem}\label{rem:hodgeRotation}
	One may notice that the A--model bi-degree for $\sA_{W,\s G}$ and the B--model bi-degree for $\sB_{W,\s G}$ for the same BVLG model are related by what looks like the rotation of the Hodge diamond that is well-known for mirror symmetry of CY varieties. Thus if one views the bi--degree correctly, then the mirror map of Theorem~\ref{thm1} can similarly be viewed as a rotation of the Hodge diamond, as one expects in mirror symmetry.
\end{rem}

\section{A Geometric View}\label{s:geometry}

In this section, there are several aspects of the geometry of the Borcea--Voisin model we would like to consider. We begin in Section~\ref{ss:GLSM} by describing a construction called the GLSM, which justifies the choice of LG model $(W,\s{G})$, and gives some reason why we expect the LG/CY correspondence to hold in this case. In Section~\ref{ss:LGCYstatespace} we will describe the LG/CY state space isomorphism, establishing an equivalence between the BVLG model state space and the state space of the corresponding BV orbifold. In Section~\ref{ss:twistmap} we describe the twist map on the geometric side, generalizing the twist map of \cite{Borcea} and \cite{ABS}. 

\subsection{Gauged linear sigma models}\label{ss:GLSM}
We begin this section by describing why the LG model described in Section~\ref{ss:BVLGmodel} has the particular form we have described. None of the ideas presented in this section will be used in any later proofs. 

The LG/CY correspondence is part of a larger idea due to E. Witten, in which he considered each theory in the correspondence (e.g. GW theory or FJRW theory) as different ``phases'' of some larger theory called the \emph{gauged linear sigma model} (GLSM) that depends on some parameter. It is conjectured that variation of this parameter will then produce the various theories involved in the LG/CY correspondence. Thus different phases of the GLSM are conjectured to be equivalent to each other in some sense. The first evidence of this equivalence is an isomorphism of the respective state spaces in each phase. 

These ideas have recently been mathematically formalized by Fan--Jarvis--Ruan in \cite{FJR15} and we begin this section by briefly describing their construction. We are intentionally vague about some aspects of this construction, since they are not necessary for our purposes. In this article, we are primarily interested in the state spaces, so we will focus our attention there. 

A GLSM depends on a choice of 1) a finite dimensional vector space $V$ over $\CC$, 2) a reductive algebraic group $G_V$ acting on $V$, 3) a $G_V$--character $\theta$, and 4) a \emph{superpotential} $\overline W: V\to \CC$. From these ingredients, one obtains a \emph{state space}, a \emph{moduli space of LG--quasimaps} with a good virtual cycle, and numerical invariants defined as integrals over the virtual cycle.

The data of the vector space $V$, the group $G_V$, and the character $\theta$ yield a geometric object called a GIT quotient. If we vary the character $\theta$, we get a different GIT quotient, and therefore a different state space, moduli space, etc. We can vary the character in the so--called phase space. This space of characters is partitioned into various chambers and varying the character within a chamber does not change the theory. However, if we cross into a different chamber, we obtain a different theory. 

The idea behind the LG/CY correspondence is that if we choose the character $\theta$ in a certain chamber, we obtain GW theory of a particular orbifold, whereas if we choose the character in different chamber, we should expect to obtain FJRW theory. 
The moduli space, virtual cycle and numerical invariants from the GLSM will give rise to such structures for Gromov--Witten theory and for FJRW theory. It has been conjectured that these structures will somehow agree for both theories, but as mentioned we will focus our attention on the state spaces. 

The relevant GLSM's for BV models are obtained using the following input. Let $V=\CC^{n+m+2}\times \CC^2$ with coordinates $x_0,\dots,x_n,y_0,\dots,y_m$ and $p_1,p_2$. We define an action of $(\CC^*)^2$ on $V$ via the weights
\[
\left(\begin{matrix}
u_0 & \dots & u_n & 0 & \dots & 0 & -d_1 & 0\\
0 & \dots & 0 & v_0 & \dots & v_m & 0 & -d_2  
\end{matrix}
\right)
\]
We can embed $(\CC^*)^2$ as diagonal matrices into $\GL(V)$ via these weights.

We can similarly embed the group $\s{G}\hookrightarrow \GL(V)$ acting only on the $x$ and $y$ coordinates. Our reductive group $G_V$ we define as the subgroup of $\GL(V)$ generated by $(\CC^*)^2$ and $\s{G}$. 
For a superpotential, we take $\overline{W}=p_1W_1 + p_2W_2$. 

In order to describe the two relevant GIT quotients, we need to choose appropriate characters of $G_V$. In order to identify the characters, we use the method of symplectic reduction. For more on this perspective and the relationship between symplectic reduction and GIT quotients, see the original construction on GLSM by Fan--Jarvis--Ruan in \cite{FJR15}. We take the standard K\"ahler metric on $V$. Since $G_V$ is reductive, it is the complexification of a maximal compact Lie subgroup $H$ acting on $V$ via a faithful unitary representation. The Lie algebra in our case is $\RR^2$. 

We consider the Hamiltonian action of $H$ on $V$, which has moment map $\mu:V\to \RR^2$ given by 
\[
\mu_1 = \sum_{i=0}^n u_i|x_i|^2 - d_1|p_1|^2, \quad \mu_2 = \sum_{j=0}^m v_i|y_i|^2 - d_2|p_2|^2.
\]
The set of critical values for this moment map is $\set{\mu_1=0}\cup\set{\mu_2=0}\subset \RR^2$.

The conditions for the appropriate GIT quotients translate to the requirement of having a regular value of the moment map in the symplectic setting. Once we have our regular values we can return to the algebraic setting to describe the GIT quotients.  Notice the set of critical values divides $\RR^2$ into 4 chambers. Each regular value within a given chamber will yield an isomorphic GIT quotient. 

A derivation of a character defines a weight in the Lie algebra $\RR^2$. We will now discuss the two chambers which yield GW theory and FJRW theory. First we describe the regular values which yield the relevant characters; then we return the the GIT perspective, and describe the unstable locus and the corresponding GIT quotient for each character which has derivation in the given chamber.

\subsubsection*{GW phase} The first chamber we consider is defined by $\mu_1>0, \mu_2>0$. In this chamber, the unstable locus is the set of points in $\CC^{n+m+2}$ with $(x_0,\dots,x_n)\neq 0$, and $(y_0,\dots,y_m)\neq 0$. We get as a GIT quotient 
\[
\left[(\CC^{n+1}\setminus 0)\times (\CC^{m+1}\setminus 0)\times \CC^2/G_V\right]\cong \left[\cO_{\PP(u_0,\dots,u_n)}(-d_1)\times \cO_{\PP(v_0,\dots,v_n)}(-d_2)/\widetilde{\s{G}}\right]. 
\]
Here $\widetilde{\s{G}}=\s{G}/(J_1\times J_2)$. The critical locus of $\overline{W}$ is $\set{W_1=0,W_2=0, p_1=p_2=0}/\widetilde{\s{G}}$, which is the stack 
\[
\left[X_{W_1}\times X_{W_2}/\widetilde{\s{G}} \right].
\]
Though it has not been verified in every case, the state space corresponding to this GIT quotient is expected to be the Chen--Ruan cohomology of the critical locus of $\overline{W}$. In Section~\ref{ss:LGCYstatespace} , we will show that the Chen--Ruan cohomology of the Borcea--Voisin orbifold is indeed isomorphic to the state space of FJRW theory. If we set $n=2, m=3$ this has a Borcea--Voisin variety as crepant resolution.

\subsubsection*{FJRW phase} The other relevant chamber is defined by $\mu_1<0, \mu_2<0$. The unstable locus is the set of points in $V$ with $p_1\neq 0$, and $p_2\neq 0$. We get the GIT quotient 
\[
\left[\CC^{n+m+2}\times (\CC^*)^2/G_V\right]\cong [\CC^{n+m+2}/\s{G}] 
\]
with superpotential $\overline{W}=W$ (we scaled away the $p$'s).

The critical locus of $\overline{W}$ is the origin. In this chamber, the state space is exactly the state space of FJRW theory.   

Because both theories are merely different phases of the same GLSM, we expect the theories to be equivalent. The most basic manifestation of this equivalence is an isomorphism of state spaces, which we show in Section~\ref{ss:LGCYstatespace}.

\subsection{LG/CY correspondence: state space isomorphism}\label{ss:LGCYstatespace}

In this section, we will prove that the two state spaces described above are isomorphic as graded vector spaces, i.e. 
\begin{equation}\label{e:stateiso}
\sA_{W,\s{G}}^{p,q}\cong H_{CR}^{p,q}\Big(\big[X_{W_1}\times X_{W_2}/\widetilde{\s{G}} \big];\CC\Big) 
\end{equation}
This is similar to what was done by Chiodo--Ruan in \cite{ChR}. However, there are some important differences, so we will give the details here. 
We begin by expressing each side of Equation~\eqref{e:stateiso} in a more useable form.

\subsubsection{FJRW state space} 
We begin with FJRW theory. 
In order to simplify notation, we will write $J=J_1\times J_2$. For $g\in \s{G}$, we write $g=(g_1,g_2)$ with $g_1\in G_{W_1}^{max}$ and $g_2\in G_{W_2}^{max}$. Notice this differs slightly from previous sections, where we wrote $g=(\epsilon/2,\alpha,\epsilon'/2,\beta)$. 

Finally, define the following notation:
\begin{align*}
\CC^{n+1}_{g_1}&=\set{x\in \CC^{n+1}\mid g_1x=x}\\
W_{1,g_1}&=W|_{\CC^{n+1}_{g_1}}\\
\sQ_{g_1}&=\sQ_{W_1,g_1}\omega_{g_1}
\end{align*}
with similar definitions for $g_2$. 

Recall the definition of the state space:
\begin{equation}\label{e:fjrwstatespacereview}
\sA^{p,q}_{W,\s{G}}=\bigoplus_{g\in\s{G}}(\sQ_{W_g}^{p,q}\omega_g)^{\s{G}}
\end{equation}

Since $W_g=W_{1,g_1}+W_{2,g_2}$, we obtain $\sQ_{W_g}=(\sQ_{W_1,g_1}\otimes \sQ_{W_2,g_2})$. 
Thus in the $g$-sector, we can write 
\begin{equation}\label{e:fjrwtensor}
\sQ_{W_g}^{p,q}\omega_g=\bigoplus_{\substack{h_1+h_2=p\\k_1+k_2=q}}(\sQ^{h_1,k_1}_{g_1}\otimes \sQ^{h_2,k_2}_{g_2})^{\s{G}}.
\end{equation}

This isomorphism sends
\[
\fjrw{m}{g}\to \fjrw{m_1}{g_1}\otimes\fjrw{m_2}{g_2}
\]
where $m=m_1m_2$. Recall the definition of bi-degree for FJRW theory in Equation~\eqref{e:fjrwbidegree}. Notice $\age g=\age g_1+\age g_2$ and $N_g=N_{g_1}+N_{g_2}$. Therefore the tensor product preserves the bigrading, i.e. the bi-degree on the left hand side of \eqref{e:fjrwtensor} is equal to the sum of the bi-degrees on the right hand side. 

Since $G\subset\SL_{W_1}\times \SL_{W_2}$, $\sigma\notin G$. We can decompose $\s{G}$ into $2M=2|G|/(d_1d_2)$ cosets of $J$. We can choose one representative for each coset so that the first $M$ coset representatives fix $x_0$ and $y_0$, and the last $M$ coset representatives are simply the first ones multiplied by $\sigma$.  We denote by $\cC$ the set of these representatives.  

Now we can write the degree $(p,q)$ part of the state space as a sum over these cosets. Using \eqref{e:fjrwstatespacereview} and \eqref{e:fjrwtensor} this becomes
\begin{equation}\label{e:FJRWdecomposition}
\sA_{W,\s{G}}^{p,q}=\bigoplus_{g\in \cC}\bigoplus_{\substack{h_1+h_2=p\\k_1+k_2=q}}\left[\left(\bigoplus_{k=1}^{d_1}\sQ_{g_1\jw_1^k}^{h_1,k_1}\right)^{J_1}\otimes\left(\bigoplus_{k=1}^{d_2}\sQ_{g_2\jw_2^k}^{h_2,k_2}\right)^{J_2}\right]^{\s{G}}.    
\end{equation}

Recall that $\jw_i$ is the generator of $J_i$ for $i=1,2$. Notice we have taken $J_1$ and $J_2$ invariants in each of the factors of the tensor product in the second line. We do this simply to make the isomorphism more clear.

\subsubsection{Chen--Ruan cohomology}
The Chen--Ruan cohomology is slightly more subtle. On the Calabi--Yau side, the state space is 
\[
H^*_{CR}\Big(\left[X_{W_1}\times X_{W_2}/\widetilde{\s{G}}\right]\Big).
\]

The \emph{Chen--Ruan orbifold cohomology} is defined via the \emph{inertia orbifold} (see \cite{ChR}).
If $\cX = [X/H]$ is a global quotient of a nonsingular
variety $X$ by a finite group $H$, the inertia orbifold $I\cX$ takes a particularly simple form.
Let $S_H$ denote the set of conjugacy classes $(h)$ in $H$,
then
\[
I\cX = \coprod_{(h) \in S_H} [ X^h/C(h) ].
\]
As a vector space, the Chen--Ruan cohomology groups $H^*_{CR}(\cX)$ 
of an orbifold $\cX$ are the cohomology groups of 
its inertia orbifold:
\[
H_{CR}^*(\cX) := H^*(I\cX).
\]
The bi-degree on the Chen--Ruan cohomology is the normal bi-degree with an \emph{age shift}, which we will describe below. 

In order to compute the Chen--Ruan cohomology, we need to describe the orbifold $\left[X_{W_1}\times X_{W_2}/\widetilde{\s{G}}\right]$ in a different form.
We write $T^2$ for the torus $(\CC^*)^2$, acting on $\CC^{m+n+2}$ via the weights 
\[
\left(\begin{matrix}
u_0 & \dots & u_n & 0 & \dots & 0 \\
0 & \dots & 0 & v_0 & \dots & v_m   
\end{matrix}
\right)
\]
and the group $\s{G}T^2$ for the product of the two groups. Notice that $\s{G}\cap T^2=J$. We will denote by $\mu_{d_1}$ the cyclic group of order $d_1$ in the first copy of $\CC^*$ and $\mu_{d_2}$ the corresponding one in the second factor. Notice that $\mu_{d_i}$ corresponds with the cyclic group $J_i$. 

Define 
\begin{align*}
V_{W_1}&=\set{W_1=0}\subset \CC^{n+1}\setminus \set{0}    \\
V_{W_2}&=\set{W_2=0}\subset \CC^{m+1}\setminus \set{0}.
\end{align*}
Notice that, since $\s{G}\cap T^2=J$, we have $\s{G}T^2/T^2\cong \widetilde{\s{G}}$. Using this description, we can express the orbifold (see \cite{romag}) as 
\[
\left[X_{W_1}\times X_{W_2}/\widetilde{\s{G}}\right]\cong \left[V_{W_1}\times V_{W_2}/\s{G}T^2\right].
\]
 
The Chen--Ruan cohomolgy can therefore be written as 
\begin{equation}\label{e:CRdirectsum}
H^{p,q}_{CR}\Big(\left[X_{W_1}\times X_{W_2}/\widetilde{\s{G}}\right]\Big)=\bigoplus_{g\in\s{G}T^2}H^{p-a(g),q-a(g)}((V_{W_1}\times V_{W_2})_{g}/\s{G}T^2;\CC).
\end{equation}
Here $a(g)$ denotes the age shift mentioned earlier. To define this we look at the action of $g$ on the tangent space at a given point. The action can be diagonalized in a suitable basis as
\[
\left(\begin{matrix}
e^{2\pi i a_1} & & 0 \\
& \ddots & \\
0& & e^{2\pi i a_{n+m}}
\end{matrix}\right).
\]
with $0\leq a_i<1$. The age $a(g)=\sum_{l=1}^{m+n}a_l$. Notice this differs from the age in FJRW theory, because of $g$ acting on the tangent space to $V_{W_1}\times V_{W_2}$ instead of the action on $\CC^{n+m+2}$. The difference in the two age contributions is discussed at some length in \cite{ChR}.

With this description of the Chen--Ruan cohomology,we can now write the Chen--Ruan cohomology of the Borcea--Voisin orbifold in a more suitable manner. Given $g\in \s{G}T^2$, we can write $g=(g_1,g_2)$ with $g_1$ acting only on the $x$'s and $g_2$ only on the $y$'s, as we did with FJRW theory. We can define $\CC^{n+1}_{g_1}$ and $W_{1,g_1}$ similar to what was in the previous section, and we define the hypersurfaces
\begin{align*}
V_{W_1,g_1}&=\set{W_{1,g_1}=0}\subset\CC^{n+1}_{g_1}\setminus \set{0}\\
V_{W_2,g_2}&=\set{W_{2,g_2}=0}\subset\CC^{m+1}_{g_2}\setminus \set{0}.
\end{align*}

Now consider a single summand corresponding to $g\in\s{G}T^2$. We can write $(V_{W_1}\times V_{W_2})_g=V_{W_1,g_1}\times V_{W_2,g_2}$
\[
(V_{W_1}\times V_{W_2})_g/\s{G}T^2\cong (V_{W_1,g_1}\times V_{W_2,g_2}/T^2)/\widetilde{\s{G}}.
\]
But 
\[
V_{W_1,g_1}\times V_{W_2,g_2}/T^2\cong X_{W_1,g_1}\times X_{W_2,g_2}.
\]

So for a fixed $g\in \s{G}T^2$, we can write the cohomology as 
\begin{align*}
H^{p-a(g),q-a(g)}&(V_{W_1,g_1}\times V_{W_2,g_2}/\s{G}T^2;\CC) = H^{p-a(g),q-a(g)}(X_{W_1,g_1}\times X_{W_2,g_2}/\widetilde{\s{G}};\CC)\\
	&\qquad\qquad\qquad\qquad\qquad\qquad=H^{p-a(g),q-a(g)}(X_{W_1,g_1}\times X_{W_2,g_2};\CC)^{\widetilde{\s{G}}}\\
	&=\bigoplus_{\substack{h_1+h_2=p\\ k_1+h_2=q}}\left(H^{h_1-a(g_1),k_1-a(g_1)}(X_{W_1,g_1};\CC)\otimes H^{h_2-a(g_2),k_2-a(g_2)}(X_{W_2,g_2};\CC)\right)^{\widetilde{\s{G}}}.
\end{align*}

Notice that the summand corresponding to $g$ in Equation~\eqref{e:CRdirectsum} is zero if $(V_{W_1}\times V_{W_2})_{g}$ is empty. As with FJRW theory, we will rewrite this sum as a sum over cosets. To this end, if we write $g_1=(g_{10},g_{11}, \dots, g_{1n})\in G_{W_1}$ and $g_2=(g_{20}, \dots, g_{2m})$ we can define 
\[
\Lambda^1_{g}=\bigcup_{k=0}^n\set{\lambda\in \CC^*\mid\lambda^{-w_k}=g_{1k}}.
\] 
This set denotes the complex numbers $\lambda\in \CC^*$ where $g_1\lambda$ has non-trivial fixed locus. The set $\Lambda^2_{g}$ is defined similarly. 

Notice that in this description, since the tangent space of a product is the product of the respective tangent spaces, we have $a(g)=a(g_1)+a(g_2)$. So the degree shift agrees on both sides of the equation. In other words, we can think of the summands as tensor product of factors of some Chen--Ruan cohomology. 

As in the FJRW state space, we can choose $2M$ cosets of $T^2$, using the same set $\cC$ of coset representatives as before. We can then write the Chen--Ruan cohomology as 
\begin{align}
H_{CR}^{p,q}&([X_{W_1}\times X_{W_2}/\s{G}];\CC)=\nonumber\\
	&\bigoplus_{g\in \cC}\bigoplus_{\substack{h_1+h_2=p\\k_1+k_2=q}}\left[\left(\bigoplus_{\lambda\in \Lambda^1_{g}}H^{h_1,k_1}(X_{W_1,g\lambda};\CC)\right)\otimes\left(\bigoplus_{\lambda\in \Lambda^2_{g}}H^{h_2,k_2}(X_{W_2,g\lambda};\CC)\right)\right]^{\s{G}}.\label{e:CRdecomposition}
\end{align} 
On both sides of this equation, we have written bi-degrees \emph{with} the age shift. This is standard on the left hand side of the equation. On the right hand side, however, we mean the $h_1-a(g_1),k_1-a(g_2)$ part of the ordinary cohomology.

\subsubsection{The Isomorphism}

Comparing expressions \eqref{e:FJRWdecomposition} and \eqref{e:CRdecomposition}, we see that the isomorphism of state spaces will follow, once we establish the isomorphism
\begin{equation}\label{e:sectoriso}
\left(\bigoplus_{k=1}^{d_i}\sQ_{g_i\jw_i^k}^{h_i,k_i}\right)^{J_i}
\cong \bigoplus_{\lambda\in \Lambda^i_{g}}H^{h_i,k_i}(X_{W_i,g_i\lambda};\CC).
\end{equation}
as an isomorphism of bigraded vector spaces. Again here, we take the convention that the right hand side of \eqref{e:sectoriso} is shifted by the age shift and the left hand side has the bi-degree defined by FJRW theory. We are summing over $J_1$ and the elements of $\CC^*$ contributing a nonzero sector to the Chen--Ruan cohomology, resp.   

Furthermore, because this isomorphism is $G_{W_i}^{max}$-equivariant, the action of $G_{W_i}^{max}$ is the same on both sides of the isomorphism (see \cite{ChR}), and hence the action of $\s{G}$ is the same on the summands of both \eqref{e:FJRWdecomposition} and \eqref{e:CRdecomposition}. 

Equation \eqref{e:sectoriso} is proven by Chiodo--Ruan in \cite{ChR}, but we give a brief outline here. For ease of exposition, we will focus on the isomorphism with $i=1$. The same will be true for $i=2$. On the left hand side of \eqref{e:stateiso} we have 
\begin{equation}\label{e:FJRWsummand}
\left(\bigoplus_{k=1}^{d_1}\sQ_{g_1\jw_1^k}^{h_1,k_1}\right)^{J_1}=\bigoplus_{\lambda\in\mu_{d_1}\cap\Lambda^1_{g}}H^{N_{g_1\lambda}}(\CC^{n+1}_{g_1\lambda}, W^{+\infty}_{g_1\lambda};\CC)^{J_1}\oplus\bigoplus_{\lambda\in\mu_d\setminus\Lambda^1_g}1_{g_1\lambda}\CC.
\end{equation}
This is the definition of the various sectors of FJRW theory. Notice we have decomposed the left hand side into a sum of broad sectors and narrow sectors. The action of $G_{W_1}^{max}$ is trivial on narrow sectors. 

On the other hand, we can express the cohomology of a hypersurface in weighted projective space as a direct sum of the ambient cohomology (coming from projective space) and the primitive cohomology. In e.g. \cite{dimca, dolg, ChR} we see that the primitive cohomology can be expressed as the cohomology of the Milnor fiber invariant under the monodromy action. Thus we can write:
\[
H^{*}(X_{W_1,g_1};\CC)\cong H^{N_{g_1\lambda}}(\CC^{n+1}_{g_1\lambda}, W^{+\infty}_{g_1\lambda};\CC)^{J_1}\oplus H^{amb}(X_{W_1,g_1})
\]
In this description, $G_{W_1}^{max}$ acts trivially on the ambient classes, and the group action on the primitive cohomology is the same as the action on the FJRW state space. 

One last thing to note is that the Chen Ruan cohomology contains a sum over elements $\lambda\in \CC^*$, or rather a sum over $\lambda\in \Lambda^1_g$, since only these contribute to the cohomology. It may happen that for some of these the corresponding diagonal symmetry does not lie in $J_1$, i.e. when $\lambda\notin\mu_{d_1}$ in the notation of the decompositions written above. In this case, $W_{1,g_1\lambda}$ vanishes on all of $\CC^{n+1}_{g_1\lambda}$ (see \cite{ChR}), and therefore the primitive cohomology vanishes. For these summands, we obtain simply the cohomology of weighted projectives space (the ambient classes). 

Thus we have 
\begin{equation}\label{e:CRsummand}
\bigoplus_{\lambda\in \CC^*}H^{\bullet}(X_{W_1,g_1\lambda};\CC)\cong\bigoplus_{\lambda\in\mu_{d_1}\cap\Lambda^1_{g}} H^{N_{g_1\lambda}}(\CC^{n+1}_{g_1\lambda}, W^{+\infty}_{g_1\lambda};\CC)^{J_1}\oplus \bigoplus_{\lambda\in\Lambda^1_{g}}H^{amb}(X_{W_1,g}).
\end{equation}
In this expression, $G_{W_1}^{max}$ acts trivially on the ambient classes. 

Comparing expressions \eqref{e:FJRWsummand} and \eqref{e:CRsummand}, we see that we only need to compare the narrow sectors and the ambient classes. The degree shift for the broad sectors and for the primitive classes in the Chen--Ruan cohomology agree because the two age shifts agree (see \cite[Lemma 22]{ChR}). 

The key observation in proving \eqref{e:sectoriso} is that the number of narrow sectors in FJRW theory is equal to the sum of the dimension of all of the primitive cohomology, when one sums over $\lambda\in \CC^*$. Furthermore, the bi-degrees of the classes also agree, after the degree shift on both sides. The interested reader can read the details in \cite{ChR}. As mentioned before, $G_{W_1}^{max}$ acts trivially on both the narrow sectors of FJRW theory and the ambient classes of the Chen--Ruan cohomology. 

This establishes \eqref{e:sectoriso} for $i=1$. We can do the same with $W_2$ obtaining a similar isomorphism. Putting these together with \eqref{e:FJRWdecomposition} and \eqref{e:CRdecomposition}, we obtain the isomorphism of state spaces predicted by the LG/CY correspondence.

\subsection{Borcea--Voisin mirror symmetry}
One particular consequence of our main theorem is a geometric statement regarding the cohomology of the corresponding BV orbifolds. Indeed, one of the first predictions of mirror symmetry is the rotation of the Hodge diamond. Such a statement is one of the main results of Chiodo--Kalashnikov--Veniani in \cite{CKV} for the Borcea--Voisin orbifolds similar to the form we consider here. In other words, they prove the following corollary in case $G=G_1\times G_2$ with $G_i\in G_{W_i}^{max}$. Here we can drop that condition. 

\begin{cor}\label{cor:BVMS} With $N=n+m-2$
	\[
	H^{p,q}_{CR}\big(\big[X_{W_1}\times X_{W_2}/\widetilde{\s{G}}\big];\CC\big)\cong H^{N-p,q}_{CR}\big(\big[X_{W_1^T}\times X_{W_2^T}/\widetilde{\s{G^T}}\big];\CC\big)
	\]
\end{cor}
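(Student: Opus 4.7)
The plan is to chain four isomorphisms through the Landau--Ginzburg side, and then use the Calabi--Yau structure on the mirror orbifold to rotate indices:
\begin{align*}
H^{p,q}_{CR}\big([X_{W_1}\times X_{W_2}/\widetilde{\s G}]\big)
&\stackrel{(1)}{\cong} \sA^{p,q}_{W,\s G}
\stackrel{(2)}{\cong} \sB^{p,q}_{W^T,\s{G^T}}\\
&\stackrel{(3)}{\cong} \sA^{p,\,N-q}_{W^T,\s{G^T}}
\stackrel{(4)}{\cong} H^{p,\,N-q}_{CR}\big([X_{W_1^T}\times X_{W_2^T}/\widetilde{\s{G^T}}]\big).
\end{align*}

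Isomorphisms (1) and (4) are instances of the LG/CY state space isomorphism established in Section~\ref{ss:LGCYstatespace}, and (2) is the main Theorem~\ref{thm1}. For step (3), the underlying vector spaces of $\sA_{W^T,\s{G^T}}$ and $\sB_{W^T,\s{G^T}}$ literally coincide (Remark~\ref{r:ABiso}), so the only thing to track is the bidegree. A short calculation on a basis vector $\fjrw{m}{g}$ using $\age g+\age g^{-1}=(n+m+2)-N_g$ and $\age\jw_W=2$ yields $\deg^A_+=\deg^B_+$ and $\deg^A_-+\deg^B_-=N$, which is exactly the bidegree rotation hinted at in Remark~\ref{rem:hodgeRotation}. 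Finally, to pass from $H^{p,N-q}_{CR}$ to the claimed $H^{N-p,q}_{CR}$, I would invoke Serre duality together with Hodge symmetry on the mirror orbifold $[X_{W_1^T}\times X_{W_2^T}/\widetilde{\s{G^T}}]$, which is Calabi--Yau of complex dimension $N=n+m-2$ since $\widetilde{\s{G^T}}\subset\SL$; the composition of the two dualities gives the required $H^{p,N-q}_{CR}\cong H^{N-p,q}_{CR}$.

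The main obstacle is verifying that the LG/CY state space isomorphism of Section~\ref{ss:LGCYstatespace}, which was proven under BVLG hypotheses on $(W,\s G)$, also applies to the mirror pair $(W^T,\s{G^T})$ in step (4). Two facts must be checked. First, $W^T$ must still admit a BVLG decomposition $W^T=W_1^T+W_2^T$ with $W_i^T=x_0^2+f_i^T$; this is immediate, since the exponent row $(2,0,\dots,0)$ corresponding to $x_0^2$ in $W_i$ produces the analogous column of $W_i^T$. Second, $G^T$ must satisfy $J_{W_1^T}\times J_{W_2^T}\subset G^T\subset \SL_{W_1^T}\times \SL_{W_2^T}$, which follows from the BHK duality $(J_{W_i})^T=\SL_{W_i^T}$ applied to the product decomposition, together with the BVLG hypothesis $J_1\times J_2\subset G\subset \SL_{W_1}\times \SL_{W_2}$. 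Once these are in place the diagram runs as advertised, and the claim follows.
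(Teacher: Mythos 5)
Your proposal is correct and follows essentially the same route as the paper: the LG/CY state-space isomorphism applied on both sides of Theorem~\ref{thm1}, combined with the identification of the A-- and B--model vector spaces and the resulting bidegree rotation. The only substantive additions are your explicit verification that $(W^T,\s{G^T})$ is again a BVLG model (so that the isomorphism of Section~\ref{ss:LGCYstatespace} applies to the mirror pair) and the explicit duality step converting $H^{p,N-q}_{CR}$ into $H^{N-p,q}_{CR}$, both of which the paper leaves implicit in Remark~\ref{r:ABiso} and Remark~\ref{rem:hodgeRotation}.
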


\begin{proof}
	From Section \ref{s:twistedstatespaceisom} we have 
	\[
	\sA_{W,\s{G}}^{p,q}\cong \sB_{W^T,\s{G^T}}^{p,q}.
	\]
	
	On the other hand, from Section \ref{ss:LGCYstatespace}, we have 
	\[
	\sA_{W,\s{G}}^{p,q}\cong H^{p,q}_{CR}(X_{W_1}\times X_{W_2}/\widetilde{\s{G}};\CC). 
	\]
	
In fact, based on Remark~\ref{r:ABiso} and Remark~\ref{rem:hodgeRotation}) we see that 
\[
\sB_{W^T,\s{G^T}}^{p,q}\cong H^{N-p,q}_{CR}(X_{W_1^T}\times X_{W_2^T}/\widetilde{\s{G^T}};\CC). 
\]
\end{proof}


If we relax the condition that $G$ be a product, we no longer are considering Borcea--Voisin orbifolds, as we have seen previously, but from the LG side, we expected mirror symmetry to hold nonetheless.

In case $n=2$, $m=3$ and $G=G_1\times G_2$ is a product of groups $G_1\subset G_{W_1}^{max}$, $G_2\subset G_{W_2}^{max}$, then a crepant resolution of both orbifolds exists, and we obtain the mirror symmetry of Borcea--Voisin at the level of state spaces.

\subsection{The twist map}\label{ss:twistmap}
Now we turn our attention to the twist map of Borcea in \cite{Borcea} and Artebani--Boissi\`ere--Sarti in \cite{ABS}. Geometrically, the twist map relates the orbifold $[X_{W_1}\times X_{W_2}/\widetilde{\s{G}}]$ to a hypersurface in a quotient of weighted projective space. What follows is a particulary nice instance where the LG side of the LG/CY correspondence informs the CY geometry, allowing us to generalize Borcea's twist map. 

In \cite{ABS} the authors describe the twist map for those pairs of polynomials $W_1$, $W_2$ with the property that $\gcd(u_0,v_0)=1$. As mentioned before, there are 44 of the 95 weight systems for K3 surfaces that admit such a polynomial, and two weight systems for an elliptic curve. Thus we have 88 total combinations. The restriction $\gcd(u_0,v_0)=1$ limits us to 48 of these combinations. However, as mentioned in Section \ref{sec:twist}, the LG/CY correspondence allows us to understand the twist map more clearly. For example, the restriction on gcd's can be lifted so that the twist map is valid for all choices of polynomials that have form \eqref{e:wxwy}, as soon as we understand the group $\tw{G}$. 

In order to define the twist map, let $\delta=\gcd(u_0,v_0)$. We define $s_0$ and $t_0$ via the equations $s_0u_0+\delta\equiv 0\pmod {v_0}$ and $t_0v_0+\delta\equiv 0\pmod {u_0}$. Let $\sigma$ be the involution on $\mathbb{P}(u_1, \ldots, u_m) \times \mathbb{P}(v_0, v_1, \ldots, v_n)$ given by $(x_0, y_0) \mapsto (-x_0, -y_0),$ 
Finally, define 
\[
s=\frac{s_0u_0+\delta}{v_0},\quad t=\frac{t_0v_0+\delta}{u_0}.
\]

Recall in the notation of Equation~\eqref{e:Wform} that the LG twist map relates the LG model $(W_1+W_2,\s{G})$ to the LG model $(f_1-f_2,\tw{G})$. One can check that $\tw{(J_1\times J_2)}\subset \tw{G}$ and contains $J_{f_1-f_2}$ and that $\tw{(J_1\times J_2)}/J_{f_1-f_2}$ is cyclic of order $\delta$. Thus this group acts on $\mathbb{P}(\frac{v_0}{\delta}u_1, \ldots, \frac{v_0}{\delta} u_m, \frac{u_0}{\delta}v_1, \ldots, \frac{u_0}{\delta}v_n)$ (see Section~\ref{sec:twist}). We define the map

\begin{align*}
\tilde{\tau}: & \mathbb{P}(u_0, u_1, \ldots, u_m) \times \mathbb{P}(v_0, v_1, \ldots, v_n) 
\ \to \ \left[\mathbb{P}\left(\frac{v_0}{\delta}u_1, \ldots, \frac{v_0}{\delta} u_m, \frac{u_0}{\delta}v_1, \ldots, \frac{u_0}{\delta}v_n\right)/\widetilde{\tw{G}}\right]
\end{align*} 
by 
\begin{align*}
((x_0, x_1, \ldots, x_n), (y_0, y_1, \ldots, y_m)) \mapsto \left(\left(x_0^{s_0}y_0^t\right)^{\frac{u_1}{\delta}}x_1, \ldots, \left(x_0^{s_0}y_0^t\right)^{\frac{u_m}{\delta}}x_m, \left(x_0^sy_0^{t_0}\right)^{\frac{v_1}{\delta}}y_1, \ldots, \left(x_0^sy_0^{t_0}\right)^{\frac{v_n}{\delta}}y_m\right).
\end{align*} 

This map depends on a choice of $\delta$-th root of unity, and on the choice of $s_0$ and $t_0$. However, one can check that with a different choice of any of these, the image differs exactly by the action of an element of $\tw{(J_1\times J_2)}$. Since the image lands in the quotient by $\tw{G}$, the map is well-defined.

Furthermore, the image written above is equivalent to 
\[
\left(\left(\frac{y_0}{x_0}\right)^{\frac{u_1}{u_0}}x_1, \ldots, \left(\frac{y_0}{x_0}\right)^{\frac{u_m}{u_0}}x_m, y_1, \ldots, y_n\right).
\]
from which we see that $\tau$ descends to a well-defined map on the orbits of $\sigma$. 

The twist map is defined as the restriction of $\tau$ to the product $X_{W_1}\times X_{W_2}$. We need to check the image of $\tau$ is contained in  $\{f_1-f_2=0\}$. Recall that the weights of $x_0$ and $y_0$ are $1/2$, so we have $2u_0=d_1$ (the degree of the first polynomial), and $2v_0=d_2$. From the definition of $s$, we obtain
\[
\frac s\delta d_2=\frac{s_0}{\delta}d_1+2 
\]
\[
\frac t\delta=\frac{t_0}{\delta}+2. 
\]
Evaluating $f_1-f_2$ at any point in the image of $\tau$, we get 
\[
x_0^{s_0d_1/\delta}y_0^{t_0d_2/\delta}(y_0^{2}f_1(x)-x_0^2f_2(y))=0 
\]

And we obtain the map 
\[
\tau:[X_{W_1}\times X_{W_2}/\widetilde{\s{G}}]\to [X_{f_1-f_2}/\widetilde{\tw{G}}]
\]

For a given choice of root, this map is smooth and a diffeomorphism almost everywhere. Furthermore, one can see from the definitions that if $\delta=1$, we obtain the twist map of \cite{ABS}, which is in fact a birational morphism.  

So in the case that both the domain and image are Calabi-Yau threefolds, and thus related by a sequence of simple flops, the two orbifolds related by the twist map have equivalent genus-zero Gromov-Witten invariants (as shown in \cite{LR}). In general, it is more difficult to describe the relationship between Gromov--Witten invariants, or between the Chen--Ruan cohomologies (see e.g. \cite{BG, LLW} for a more detailed account). However, using the LG/CY correspondence, we will see that the Chen--Ruan cohomology of the two objects related by the twist map are indeed isomorphic, even in higher dimensions, as in the following corollary. This relates the Chen--Ruan cohomology of a product with the cohomology of a hypersurface.



\begin{cor}There is an isomorphism of bigraded vector spaces
	\[
	H^{p,q}_{CR}\Big(\big[X_{W_1}\times X_{W_2}/\widetilde{\s{G}}\big];\CC\Big)\cong H^{p,q}_{CR}\Big(\big[X_{f_1-f_2}/\widetilde{\tw{G}}\big];\CC\Big)
	\]
\end{cor}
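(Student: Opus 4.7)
The plan is to deduce this corollary by combining two main inputs from the paper: the LG/CY state space isomorphism established in Section~\ref{ss:LGCYstatespace} and the LG twist map isomorphism of Theorem~\ref{t:twist}. First I would rephrase both cohomologies as FJRW state spaces. On the left, Section~\ref{ss:LGCYstatespace} gives a bigraded isomorphism
\[
\sA^{p,q}_{W,\s{G}}\;\cong\;H^{p,q}_{CR}\Big(\big[X_{W_1}\times X_{W_2}/\widetilde{\s{G}}\big];\CC\Big),
\]
where $W=W_1+W_2$. On the right, the hypersurface $X_{f_1-f_2}$ in the quotient of weighted projective space fits directly into the LG/CY correspondence for hypersurfaces proved by Chiodo--Ruan (\cite{ChR}), provided the Calabi--Yau condition $\age \jw_{\tw{W}}=1$ holds for $\tw{W}=f_1-f_2$.

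Next I would verify this Calabi--Yau condition: the weight system of $\tw{W}$ is $(\tfrac{v_0 u_1}{\delta},\dots,\tfrac{v_0 u_n}{\delta},\tfrac{u_0 v_1}{\delta},\dots,\tfrac{u_0 v_m}{\delta};\lcm(d_1,d_2))$, and using $d_1=2u_0$, $d_2=2v_0$, $\lcm(d_1,d_2)=2u_0v_0/\delta$, and the original Calabi--Yau conditions $\sum_{i=1}^n u_i=d_1-u_0$ and $\sum_{j=1}^m v_j=d_2-v_0$, a short calculation yields $\age \jw_{\tw W}=1$. Thus Chiodo--Ruan applies and gives a bigraded isomorphism
\[
\sA^{p,q}_{\tw{W},\tw{G}}\;\cong\;H^{p,q}_{CR}\Big(\big[X_{f_1-f_2}/\widetilde{\tw{G}}\big];\CC\Big).
\]

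Finally, I would invoke Theorem~\ref{t:twist}, which provides a bigraded vector space isomorphism $tw_A:\sA_{W,\s{G}}\to\sA_{\tw{W},\tw{G}}$. Stringing these three bigraded isomorphisms together yields
\[
H^{p,q}_{CR}\Big(\big[X_{W_1}\times X_{W_2}/\widetilde{\s{G}}\big];\CC\Big)\;\cong\;\sA^{p,q}_{W,\s{G}}\;\stackrel{tw_A}{\cong}\;\sA^{p,q}_{\tw{W},\tw{G}}\;\cong\;H^{p,q}_{CR}\Big(\big[X_{f_1-f_2}/\widetilde{\tw{G}}\big];\CC\Big),
\]
which is the desired statement.

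The main obstacle I anticipate is not the chaining, which is formal, but verifying that the right-hand LG/CY statement is genuinely within the scope of Chiodo--Ruan: one needs $\tw{W}$ to be a nondegenerate quasihomogeneous polynomial satisfying the Calabi--Yau condition (already checked above), and one needs the quotient group $\widetilde{\tw{G}}=\tw{G}/J_{\tw{W}}$ to act correctly on the weighted projective space so that the hypersurface quotient is well-defined. The key technical point is that $\tw{(J_1\times J_2)}$ contains $J_{f_1-f_2}$ with cyclic quotient of order $\delta$, as noted in Section~\ref{ss:twistmap}, so that $\widetilde{\tw G}$ is a well-defined symmetry group of the hypersurface $X_{f_1-f_2}$; once this is in place, the Chiodo--Ruan isomorphism can be applied verbatim.
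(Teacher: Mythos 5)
Your proposal is correct and follows essentially the same route as the paper's own proof: identify each Chen--Ruan cohomology with the corresponding FJRW state space (the left-hand side via Section~\ref{ss:LGCYstatespace}, the right-hand side via Chiodo--Ruan applied to the hypersurface $X_{f_1-f_2}$), and then link the two A--models by the twist map isomorphism of Theorem~\ref{t:twist}. Your extra verification that $\age \jw_{\tw{W}}=1$ and that $\widetilde{\tw{G}}$ is a legitimate symmetry group is a welcome bit of diligence that the paper leaves implicit, but it does not change the argument.
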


\begin{proof}
	From Section \ref{ss:LGCYstatespace} we have
	\[
	\sA^{p,q}_{W,\s{G}}\cong H^{p,q}_{CR}\Big(\big[X_{W_1}\times X_{W_2}/\widetilde{\s{G}}\big];\CC\Big).
	\]
	From \cite{ChR} we have
	\[
	\sA^{p,q}_{\tw{W},\tw{G}}\cong H^{p,q}_{CR}\Big(\big[X_{f_1-f_2}/\widetilde{\tw{G}}\big];\CC\Big).
	\]
	Finally from Section \ref{s:twistedstatespaceisom}, we have
	\[
	\sA^{p,q}_{W,\s{G}}\cong\sA^{p,q}_{\tw{W},\tw{G}}
	\]
\end{proof}

\section{Landau--Ginzburg Algebra Isomorphism}\label{sec:alg_isom}

Finally, we return to the question mentioned in the introduction about Frobenius algebra structure. Recall that a Frobenius algebra is a $\ZZ_2$-graded supercommutative $\CC$-algebra together with a pairing that satisfies the Frobenius property
\[
\langle\alpha,\beta\cdot\gamma\rangle=\langle\alpha\cdot\beta,\gamma\rangle.
\]
We have shown mirror symmetry holds on the level of bigraded vector spaces. But as we have remarked, each state space also has the structure of a Frobenius algebra over $\CC$. It is expected that we should also find an isomorphism of Frobenius algebras. 

There are several reasons for including this in the final section. The first reason is that it is not the main result of the paper, but we find it interesting, nonetheless. Our method exhibits an interesting relationship between different mirrors of the same object (see Remark~\ref{rem:multmir}). 

The second reason has to do with the issues mentioned in the introduction. The Frobenius algebra structure for the A--model was defined in \cite{FJR13}, however, to our knowledge, nobody has yet considered the supercommutativity of the Frobenius algebra. In fact, most thought that the A--model state space was commutative. Part of the reason for this, is that most papers only deal with the the $G^{\max}$-invariant subspace of the full state space. In this article we expand our view slightly, and consider the even-graded subspace.  

And finally, on the B--side, there is some question as to the proper definition of the product for the Frobenius algebra. There are several suggested definitions, including that of Krawitz \cite{Kr}, Basalaev--Takahashi--Werner \cite{BTW}, and He--Li--Li \cite{HLL}. Again the issue of supercommutativity arises, since the definition of Krawitz produces a commutative Frobenius algebra, instead of a supercommutative one. However, if we restrict to the even graded subspace, the supercommutativity becomes commutativity. 

In this article, we will exploit the work of Francis--Jarvis--Johnson--Suggs in \cite{FJJS}, who use Krawitz' definition of the B--model Frobenius algebra. This is the only work we know of that really deals with the Frobenius algebra structure for LG mirror symmetry. 

In order to avoid the issues just mentioned, we restrict ourselves to the even-graded part of each state space. It is perhaps important to note that the $G^{max}$ invariant subspace in the $A$--model state space is even-graded, (see \cite[Lemma 1.4]{Kr}). This is a particularly important subspace of the state space (see e.g. \cite[Conjecture 4.8]{ChRglobal}) as most instances of the LG/CY correspondence are proven only for this subspace. 

We begin with the definitions of the pairing and algebra structure for both the A-- and B--models. Then we will prove that they are isomorphic as Frobenius algebras.

\subsection{A--model Frobenius algebra}
For the A--model we will first define the pairing, but we will not define the product fully, instead relying on the B--model to prove equivalence. 
Recall that the A--model state space is given by $\sA_{W,\s{G}}$ as in Section~\ref{ss:LGA}. For $\fjrw{m}{g}\in \sA_{W, \s{G}}$, the $\ZZ_2$ grading is defined simply by $N_g\pmod 2$. Notice, in our case, this agrees with the A--model grading, since 
\[
\deg_+^A(\fjrw{m}{g})+\deg_-^B(\fjrw{m}{g})=N_g+2\age g-2\age \jw. 
\]

As mentioned, we restrict our attention to the even-graded subspace of $\sA_{W, \s{G}}$, which we denote by $\sA^0$.

Now we turn our attention to the pairing. Recall the definition of $\sA_{W, \s{G}}$ involved some spaces of Lefschetz thimbles, which are equipped with a natural non--degenerate pairing:
\[
H^{mid} (\Fix(g),(W)_g^{-1}( \infty))^G \times H^{mid} (\Fix(g),(W)_g^{-1}( \infty))^G\to \CC. 
\]

The Milnor ring $\sQ_W$ also has a natural residue pairing $\langle \cdot , \cdot\rangle_W$ determined by the equation 
\begin{equation}\label{e:milpairing}
f\cdot g =  \frac{\Hess(W)}{\mu_W}\langle f, g\rangle_W+ \text{ lower order terms}.
\end{equation}

In our case the identification between the two descriptions of the A--model in Theorem~\ref{t:Wall} also respects the pairing on both spaces. We can now describe the A--model pairing on $\sA^0$. 
\begin{equation}\label{e:pairing}
\left\langle\fjrw{m}{g},\fjrw{n}{h} \right\rangle_A = \left\{ \begin{array}{cc}
\langle m,n\rangle_{W_g} & if \quad h = g^{-1}\\
0		& \text{otherwise}.
\end{array}\right.
\end{equation}
Here the pairing on the right--hand side is the pairing from the Milnor ring. Notice that this pairing is well--defined, since $W_g = W_{g^{-1}}$, and so $(\sQ_{g}\omega_g)^{G} \cong (\sQ_{g^{-1}}\omega_{g^{-1}})^{G}$. This agrees with the original pairing on the state space described above. Notice that the pairing only pairs even classes with even classes (with respect to the $\ZZ_2$ grading).   
We will use $\eta$ to denote the matrix associated to this pairing in the basis described above. 

\begin{rem}
	The pairing can be defined on the entire space, but we will avoid this technicality.
\end{rem}

\begin{rem}\label{pairing:conditions}
	The above implies that  $\langle\fjrw{m}{g},\fjrw{n}{h} \rangle \neq 0$ exactly when $g = h^{-1}$ and one of the monomials of $m\cdot n$ is a scalar multiple of $\Hess(W_g)$. By $m\cdot n$ we mean the product of the monomials multiplied by the appropriate volume form $\omega_g$. 
\end{rem}

The product on the A--model is defined in \cite{FJR13} via the structure constants
\[
a\star_{\sA} b:= \sum_{\xi,\xi'}\langle a,b,\xi \rangle_{0,3} \eta^{\xi,\xi'}\xi'
\]
where the sum runs over a basis of $\sA_{W,G}$, and $\eta^{\xi,\xi'}$ are the corresponding entries from the matrix inverse to the pairing matrix $\eta$. 

The structure constants $\langle a,b,\xi \rangle_{0,3}$ are defined in FJRW theory via certain integrals over the moduli space of curves $\overline \cM_{0,3}$, and there are corresponding numbers for higher genus and more marked points as well (see \cite{FJR13}).  Explicit computations of some of these constants in certain cases are given in \cite{FJR13, kpabr, D4, Kr, FJJS} and most recently in \cite{HLSW}, and methods for computations are given in \cite{Guere, Francis}. 

One fact regarding these structure constants that we should note in this case: if two of the entries are even-graded, the structure constant vanishes, unless the third is also even-graded (see \cite{FJR13}). Thus the even-graded subspace forms a subalgebra.

We will not need these numbers explicitly in this work. We rely instead on the B--model.

\subsection{B--model Frobenius algebra}\label{ss:BmodelAlg}
For the B--model we will also first define the pairing, and then we will define the product. 

Recall that the B--model state space is given by $\sB_{W^T,\s{G^T}}$. For $\fjrw{m}{g}\in \sB_{W^T, \s{G^T}}$, the $\ZZ_2$ grading is defined simply by $N-N_g\pmod 2$. Again, in this case, this agrees with the B--model grading, since 
\begin{align*}
\deg_+^B(\fjrw{m}{g})+\deg_-^B(\fjrw{m}{g})&=\age g+\age g^{-1}+2\deg m-2\age \jw\\
	&=N-N_g+2\deg m-2\age \jw.
\end{align*}

As mentioned, we restrict our attention to the even-graded subspace of $\sB_{W^T, \s{G^T}}$, and again denote it by $\sB^0$.

For the B--model, the pairing is defined as in \eqref{e:milpairing} and \eqref{e:pairing}. This differs from the pairing defined in \cite{HLL} only in the factor $\mu_g$ and from the pairing in \cite{BTW} by the same factor and an additional overall factor of $-|\s{G}|$. 

On the B--side, the product was defined by Intriligator, Vafa, and Kaufmann \cite{IV, kau1,kau2,kau3}, and written explicitly in the form we will use now by Krawitz \cite{Kr}. In what follows, we write 
\[
W_{g\cap h}:= W|_{\Fix g \cap \Fix h}
\]
and $\mu_{g\cap h}$ for the dimension of the corresponding Milnor algebra.

The B--model multiplication is defined on elements of the form $\fjrw{1}{g}$ and then extended to the entire state space multilinearly. On these particular basis elements, the definition of the product is 
\[
\fjrw{1}{g}\star_{\sB} \fjrw{1}{h} = \gamma_{g,h} \fjrw{1}{gh}, \text{ where}
\]
\[
\gamma_{g,h} \frac{\Hess (W_{g\cap h})}{\mu_{g\cap h}} = \left\{ \begin{array}{l l }
\frac{\Hess (W_{gh})}{\mu_{gh}} & \text{ if } I_g \cup I_h \cup I_{gh} = \{1, \ldots, N\}\\
0 & \text{ otherwise} \\ \end{array}\right.
\]

Notice also in this definition, that the product of two even-graded basis elements is also even-graded, so this defines a product on $\sB^0$ (see \cite{BTW}). 

\begin{rem}
	The definitions of \cite{BTW} and \cite{HLL} are similar to this definition, but have some other constants. 
\end{rem}

\subsection{A-- and B--model equivalence}

In what follows we will restrict our focus to those LG models $(W,\s{G})$ satisfying the following property:
\begin{property}\label{property}
	Let $W$ be a non-degenerate, invertible singularity, and let $G$ be an admissible group of symmetries of $W$. We say that \emph{the pair $(W,G)$ has \autoref{property}} if, 
	\begin{enumerate}
		\item $W$ can be decomposed as
		$   W = \sum_{i=1}^{M} W_i ,$
		where the $W_i$ are themselves invertible polynomials having no variables in common with any other $W_j$.  
		\item For any element $g$ of $G$ whose associated sector  $\sA_g \subseteq \sA_{W,G}$ is nonempty, and for each $i\in \{1,\dots ,M\}$ the action of $g$ fixes either all of the variables in $W_i$ or none of them. 
		\item For any element $g'$ of $G^T$ whose associated sector of $\sB_{g'} \subseteq \sB_{W^T,G^T}$ is non-empty, and for each $i\in \{1,\dots ,M\}$ the action of $g'$ fixes either all of the variables in $W_i^T$ or none of them.
	\end{enumerate}
\end{property}

In \cite{FJJS}, it was shown that for LG models $(W,G)$ satisfying \autoref{property}, we have an isomorphism of Frobenius algebras
\[
\sA_{W,G}\cong \sB_{W^T,G^T}, 
\]
given by a rescaling of the Krawitz map. 
The conditions are satisfied for a majority of cases---in particular for all polynomials with no chains (see \cite[Remark 1.1.1]{FJJS}).  
The result assumes that the A--model and B--model Frobenius algebras are commutative. However, one can see that the theorem still holds for the even-graded subspaces. 

We obtain the following result:
\begin{thm}\label{thm2}
If $W$ and $\s G$ satisfy \autoref{property}
then $\sA^0 \cong \sB^0$ as Frobenius algebras.
\end{thm}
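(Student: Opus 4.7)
The plan is to upgrade the commutative diagram from the proof of Theorem~\ref{thm1} so that every arrow becomes a Frobenius algebra isomorphism on the even-graded subspace, and then to compose. The main tool is the result of Francis--Jarvis--Johnson--Suggs, which needs to be applied not directly to $(W,\s G)$ but to the twisted pair $(\tw W, \tw G)$.

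The first step is to verify that $(\tw W, \tw G)$ inherits \autoref{property} from $(W,\s G)$. The polynomial $\tw W = f_1 - f_2$ already splits as a sum of two invertible polynomials in disjoint variables. Any finer decomposition of $W_1 = x_0^2 + f_1$ and $W_2 = y_0^2 + f_2$ into invertible blocks guaranteed by the first condition of \autoref{property} passes to a finer decomposition of $\tw W$ by deleting the $x_0^2$ and $y_0^2$ summands. Elements of $\tw G$ lift via $\phi$ to elements of $\s G$ that fix $x_0$ and $y_0$, so the all-or-nothing dichotomy on fixed variables transfers immediately. The same argument applied to $(\tw G)^T = \tw{G^T}$ acting on $(\tw W)^T = \tw{W^T}$ (using the lemma in Section~\ref{s:twistedstatespaceisom}) gives the third condition. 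Consequently, FJJS yields a Frobenius algebra isomorphism
\[
\sA^0_{\tw W,\tw G}\;\cong\;\sB^0_{\tw{W^T},\tw{G^T}}.
\]

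The third and main step is to show that the twist maps $tw_A$ and $tw_B$ of Theorem~\ref{t:twist} and Corollary~\ref{c:Btwistiso} are Frobenius algebra isomorphisms on the even-graded subspaces. Parity is preserved because $N_g = 2(1-\epsilon) + N_{g'}$. Compatibility with the pairing is essentially the reason for the factor $2^{1-\epsilon}$ in the definition of the twist map: for the $\epsilon = 0$ sectors the Hessian of $W_g$ acquires an extra factor of $\Hess(x_0^2)\Hess(y_0^2) = 4$ relative to $\Hess(\tw W_{g'})$, which is exactly cancelled by the two copies of $2^{1-\epsilon}$ contributed by the twist map on the two paired elements. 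For the product, I would work on the B-side where Krawitz's explicit formula applies. On basis elements $\fjrw{1}{g}, \fjrw{1}{h}$ with $g,h$ of the form $(\epsilon/2,\alpha,\epsilon/2,\beta)$, Lemma~\ref{l:allornothing} forces $g,h,gh$ to each fix $x_0$ and $y_0$ or neither, so the support condition $I_g\cup I_h\cup I_{gh} = \{0,\dots,n+m+1\}$ reduces to its twisted counterpart, while both $\Hess(W^T|_{\Fix g\cap \Fix h})$ and the corresponding Milnor number factor over the $x_0^2, y_0^2$ summands and the $\tw{W^T}$-block, producing the same power-of-two normalization that appears in $tw_B$. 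Compatibility of $tw_A$ with the product then follows by transport through Krawitz on the non-twisted side.

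Composing $tw_A$, the FJJS isomorphism on the twisted model, and $tw_B^{-1}$ yields the desired Frobenius algebra isomorphism $\sA^0\cong\sB^0$. The principal obstacle is the Hessian-and-Milnor-number bookkeeping in step three, especially in the involution sector $\epsilon = 1$, where the fixed locus loses two dimensions; one must check that the constants arising from the missing $x_0^2$ and $y_0^2$ terms track cleanly through Krawitz's formula and that the even-graded restriction is genuinely needed (to avoid potential sign discrepancies from supercommutativity) but sufficient for the argument to go through.
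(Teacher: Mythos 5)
Your architecture differs from the paper's in where the FJJS input is applied, and as written it has a genuine gap at the step where $tw_A$ is claimed to preserve the product. The A--model product is defined via genus-zero three-point FJRW correlators, for which no direct formula is available here; the only route to it is through a Krawitz/FJJS isomorphism. ``Transport through Krawitz'' therefore requires FJJS isomorphisms on \emph{both} ends of $tw_A$, i.e.\ for $(W,\s G)$ and for $(\tw W,\tw G)$, and the induced B--side map is $\kappa_2^{-1}\circ tw_A\circ\kappa_1:\sB_{W^T,(\s G)^T}\to\sB_{(\tw W)^T,(\tw G)^T}$ between the \emph{BHK} mirrors. This is not the map $tw_B$ you analyze, which relates the \emph{BV} mirrors $\sB_{W^T,\s{G^T}}$ and $\sB_{\tw{(W^T)},\tw{(G^T)}}$; since $(\s G)^T\subsetneq\s{G^T}$, the two maps have different domains (different sectors and different invariant subspaces), so a single Hessian-and-Milnor-number computation does not cover both. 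Your route also needs \autoref{property} for $(\tw W,\tw G)$ in order to invoke FJJS on the twisted side; the transfer of its third condition is not immediate, because $(\tw G)^T=\tw{(G^T)}$ lifts into $\s{G^T}$ while the hypothesis only controls sectors of $(\s G)^T$.

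The paper avoids both issues by applying FJJS exactly once, to the pair $(W,\s G)$ appearing in the hypothesis, and then verifying directly that a single rescaled composite $\varphi:\sB_{W^T,(\s G)^T}\to\sB_{W^T,\s{G^T}}$ --- a map between two B--models of the \emph{same} polynomial $W^T$ with different groups --- preserves Krawitz's product and the residue pairing. The computation there is the one you correctly anticipate (the factor $\Hess(x_0^2)\Hess(y_0^2)=4$, Lemma~\ref{l:allornothing}, and sector-dependent rescalings by $1$, $1/2$, $2$, organized through the relation between $\gamma_{g,h}$ and $\gamma_{g,\sigma h}$ in Lemma~\ref{lem:gamma_relations}), so your bookkeeping instincts are right; but it is performed once, on the B--side only, and never requires $tw_A$ or $tw_B$ individually to be algebra maps, nor \autoref{property} for the twisted pair. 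To salvage your approach you would need to carry out the Krawitz-product verification twice (once for each of the two B--side twist maps) and supply the \autoref{property} transfer; at that point you would have done strictly more work to prove the same statement.
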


To prove Theorem~\ref{thm2}, we will use the following lemma. 

\begin{lem}\label{lem:gamma_relations}
	\[
	\gamma_{g,h} = \left\{ 
	\begin{array}{ll}
	4 \gamma_{g, \sigma h} & \text{ if } g = (1/2, \alpha, 1/2, \beta),\ h = (1/2, \delta, 1/2, \nu)\\
	1/4 \gamma_{g, \sigma h} & \text{ if } g = (1/2, \alpha, 1/2, \beta),\ h = (0, \delta, 0, \nu)\\
	\gamma_{g, \sigma h} & \text{ otherwise} \end{array}\right.
	\]
\end{lem}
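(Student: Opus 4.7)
The plan is to reduce the lemma to a direct case analysis enabled by Lemma~\ref{l:allornothing} and the block structure $W^T = x_0^2 + f_1^T + y_0^2 + f_2^T$. First, I would note that Lemma~\ref{l:allornothing} applies equally to the B--model (its proof only uses the form of the state space $(\sQ_g\omega_g)^G$), so any $g,h \in \s{G^T}$ contributing a non--empty sector has the form $(\epsilon/2,\alpha,\epsilon/2,\beta)$ for some $\epsilon\in\{0,1\}$. Thus there are four possibilities for the pair $(g,h)$, which group into the three cases of the lemma (Case~3 comprising both subcases in which $g = (0,\alpha,0,\beta)$).

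Next, I would record the key structural observation: multiplying $h$ by $\sigma$ toggles the $\epsilon$--component of $h$, so it toggles whether $x_0,y_0$ lie in $I_h$ and, correspondingly, whether they lie in $I_{gh}$. On the other hand, $\sigma$ acts trivially on the remaining variables, so
\[
I_h \setminus \{x_0,y_0\} = I_{\sigma h}\setminus\{x_0,y_0\}, \qquad I_{gh}\setminus\{x_0,y_0\} = I_{g\sigma h}\setminus\{x_0,y_0\}.
\]
A short bookkeeping check then shows that in every one of the four cases the pair $\{x_0,y_0\}$ is covered by $I_g\cup I_h\cup I_{gh}$ (and equally by $I_g\cup I_{\sigma h}\cup I_{g\sigma h}$), so the supporting condition $I_g\cup I_h\cup I_{gh} = \{1,\dots,N\}$ holds for $(g,h)$ iff it holds for $(g,\sigma h)$. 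In particular, if one side of the claimed equality is zero then so is the other, and only the non--zero case requires computation.

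In the non--zero case, I would use the formula
\[
\gamma_{g,h} \;=\; \frac{\Hess(W^T_{gh})}{\mu_{gh}}\Big/\frac{\Hess(W^T_{g\cap h})}{\mu_{g\cap h}}
\]
together with the multiplicativity of $\Hess$ and $\mu$ over polynomials in disjoint variables. Since $\Hess(x_0^2) = 2$ and $\mu(x_0^2) = \dim\CC[x_0]/(2x_0) = 1$ (and likewise for $y_0^2$), inserting or removing the summand $x_0^2+y_0^2$ from any $W^T_{?}$ multiplies the Hessian by $4$ and leaves the Milnor number unchanged. In each of the four cases I would then write down which of $W^T_{g\cap h}, W^T_{gh}, W^T_{g\cap\sigma h}, W^T_{g\sigma h}$ contain the $x_0^2$ and $y_0^2$ summands. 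For Case~1 ($\epsilon = 1$ for both $g$ and $h$) only $W^T_{gh}$ acquires both summands upon passing from $(g,h)$ to $(g,\sigma h)$, giving a factor $4$ in the numerator and no change in the denominator; this yields $\gamma_{g,h} = 4\gamma_{g,\sigma h}$. Case~2 is symmetric with the roles reversed, producing the factor $1/4$. In each subcase of Case~3 the $x_0^2+y_0^2$ summand toggles \emph{simultaneously} in numerator and denominator (both in $W^T_{g\cap h}$ vs.\ $W^T_{g\cap \sigma h}$ and in $W^T_{gh}$ vs.\ $W^T_{g\sigma h}$), so the factors of $4$ cancel and $\gamma_{g,h} = \gamma_{g,\sigma h}$.

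I do not anticipate a serious obstacle. The main place to be careful is purely bookkeeping: tracking, in each of the four cases simultaneously, which of the four polynomials $W^T_{g\cap h}, W^T_{gh}, W^T_{g\cap\sigma h}, W^T_{g\sigma h}$ picks up the $x_0^2+y_0^2$ block, and thus the factors of $4$ in the Hessian. Handling this in a single uniform table rather than four separate computations should keep the argument compact.
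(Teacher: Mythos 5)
Your proof is correct and is exactly the direct verification the paper has in mind --- the paper itself supplies no argument, stating only that ``the straightforward proof is left to the reader.'' Your bookkeeping of which of $W^T_{g\cap h}$, $W^T_{gh}$, $W^T_{g\cap\sigma h}$, $W^T_{g\sigma h}$ contain the $x_0^2+y_0^2$ block, combined with $\Hess(x_0^2+y_0^2)=4$, $\mu(x_0^2+y_0^2)=1$, multiplicativity over disjoint variables, and the check that the support condition $I_g\cup I_h\cup I_{gh}=\{1,\dots,N\}$ is invariant under $h\mapsto\sigma h$, correctly yields the factors $4$, $1/4$, and $1$ in the three cases.
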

The straightforward proof is left to the reader.

\subsubsection{Proof of Theorem~\ref{thm2}}We will now exploit the isomorphism of Theorem~\ref{thm1} to provide an isomorphism of Frobenius algebras. 

We have the following diagram of Landau--Ginzburg models. The lower vertical (non--dashed) arrows come from Krawitz's mirror map. As mentioned above, under the conditions stated in Theorem~\ref{thm2}, there exist rescalings of these maps which yield the desired isomorphisms of Frobenius algebras. The horizontal arrows (and vertical double line) are the vector space isomorphisms of the previous section.   

\begin{center}\begin{tabular}{ccc}
		$\sB_{\tw{(W^T)},\tw{(G^T)}}$&\begin{tikzpicture} \draw[<-] (0,0) -- (1,0); \node at (.5,.25) {$tw_B$}; \end{tikzpicture} &$\sB_{W^T, \s{G^T}}$ \\
		\begin{tikzpicture} \draw[double] (0,0) -- (0,1); \node at (0,1.1) {}; \end{tikzpicture} & & \begin{tikzpicture} \draw[<->, dashed] (0,0) -- (0,1); \node at (0,1.1) {}; \end{tikzpicture} \\
		$\sB_{(\tw{W})^T,(\tw{G})^T}$ & &$\sB_{W^T,(\s{G})^T}$\\
		\begin{tikzpicture} \draw[<-] (0,0) -- (0,1); \node at (.7,1.1) {}; \node at (-.5,.5) {$\kappa_2$}; \end{tikzpicture} & & \begin{tikzpicture} \draw[<-] (0,0) -- (0,1); \node at (-.7,1.1) {}; \node at (.5,.5) {$\kappa_1$};\end{tikzpicture} \\
		$\sA_{\tw{W},\tw{G}}$&\begin{tikzpicture} \draw[<-] (0,0) -- (1,0); \node at (.5,.25) {$tw_A$};\end{tikzpicture} &$\sA_{W, \s{G}}$ \\
\end{tabular}\end{center}

Using this description as a guide, we will define a map $\varphi:\sB_{W^T,(\s{G})^T} \to \sB_{W^T,\s{G^T}}$, which is a rescaling of the composition $tw_B^{-1}\circ \kappa_2^{-1} \circ tw_A \circ \kappa_1$ of the solid arrows in the diagram, each of which is a degree--preserving isomorphism of graded vector spaces.  

Let $\sB^2$ denote the even-graded subspace of $\sB_{W^T, (\s{G})^T}$ (the BHK mirror). We then show that the restriction
\[
\bar\varphi:\sB^2\to\sB^0
\] 
preserves products and pairings. The isomorphism of Theorem~\ref{thm2} is the composition 
\[
\bar\varphi\circ\kappa_1^{-1}:\sA^0\to \sB^0
\]  

%

\begin{rem}\label{rem:multmir}
It is also worth noting that in the diagram, there are two \emph{different} B--models. This is related to the so--called multiple mirror phenomenon as mentioned in the Introduction. The B--model in the middle of the right-hand side is BHK mirror for $(W,\s{G})$. However, as we have seen, $\s{G^T}\neq (\s{G})^T$ and so the corresponding geometry does not fit the Borcea--Voisin construction. Because of the LG/CY correspondence, we expect a B--model whose corresponding geometry is Borcea--Voisin type, which is what the upper right--hand B--model is. Also, since we have two B--models, both mirror to the same A--model, it should be expected that the state spaces for the two B--models are isomorphic. 
\end{rem}

For what follows, 
we introduce the notation $\sB_{BHK} = \sB_{W^T, (\s{G})^T}$ (the BHK mirror) and $\sB_{BV} = \sB_{W^T, \s{G^T}}$ (the BV mirror). 

It is straightforward to verify that for any $\fjrw{m}{g} \in \sB_{BHK} $, where $g = (\epsilon/2, \alpha, \epsilon/2, \beta)$, $g' = (\alpha, \beta)$ and $m'$ is the same as $m$ except for the absence of the $dx_0 \wedge dy_0$ term (which may or may not be present in $m$). 
\[ \kappa_2^{-1} \circ tw_A \circ \kappa_1 \left( \fjrw{m}{g}\right) = k_{m,g} \fjrw{m'}{g'},\]
for some constant $k_{m,g}$.  Thus, composing finally with  $tw_B^{-1}$, gives the following map: 
\begin{equation}\label{algmap1}
\begin{array}{c}
tw_B^{-1}\circ \kappa_2^{-1} \circ tw_A \circ \kappa_1(\fjrw{m}{g})=\hspace*{6cm}\\ [2mm]
\hspace*{3cm}\left\{ \begin{array}{ll}
k_{{m},{g}}\fjrw{m'}{(1/2, \alpha, 1/2, \beta)} &  \text{ if }\jw_{x,B}\text{ acts with weight 0 on }m',\\
k_{{m},{g}}\fjrw{m'dx_0 \wedge dy_0}{(0, \alpha, 0, \beta)} & \text{ otherwise}, \end{array}\right.
\end{array}
\end{equation}
where $\jw_{x,B}$ is the exponential grading operator associated with $f_1^T$. 

Note that for $\fjrw{m}{g}\in \sB_{BHK}$,  $g$ is always an element of $\s{G^T}$, since $(\s{G})^T\subset \s{G^T}$. But $m$ may not be fixed by the action of all elements in the larger group.  We shall use $\tilde{m}$ to denote the monomial plus volume form which  differs from $m$ by the $dx_0\wedge dy_0$ volume form (if $m$ contains the term $dx_0\wedge dy_0$, then $\tilde{m}$ does not, and vice-versa). 
Lemma~\ref{l:allornothing} shows that exactly one of $\fjrw{m}{g}$ or $\fjrw{\tilde m}{\sigma g}$ is in $\sB_{BV}$.  The following lemma further clarifies the map in Equation~\eqref{algmap1}.

\begin{lem}
	Let $\fjrw{m}{g} \in \sB_{BHK}$, then 
	\[
	tw_B^{-1}\circ \kappa_2^{-1} \circ tw_A \circ \kappa_1(\fjrw{m}{g})= \left\{ \begin{array}{ll}
	k_{m,g}\fjrw{m}{g} &  \text{ if }\jw_{W_1^T}\text{ fixes }m,\\
	k_{m,g}\fjrw{\tilde{m}}{\sigma{g}} & \text{ otherwise}. \end{array}\right.
	\]
	Further, $\fjrw{m}{g} \in \sB_{BV}$ exactly when $\jw_{W_1^T}$ fixes $m$. 
\end{lem}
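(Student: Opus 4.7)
The plan is to verify both assertions by direct case analysis starting from the explicit formula for the composition given in Equation~(\ref{algmap1}). Since $W_1^T = x_0^2 + f_1^T$, we have $\jw_{W_1^T} = (1/2, \jw_{x,B})$ on the $x$-coordinates and $0$ on the $y$-coordinates. For $\fjrw{m}{g} \in \sB_{BHK}$ with $g = (\epsilon/2, \alpha, \epsilon/2, \beta)$, I would decompose $m = (dx_0 \wedge dy_0)^{1-\epsilon}\cdot p_x \cdot p_y$, with $p_x$, $p_y$ supported in the remaining fixed variables. The action of $\jw_{W_1^T}$ on $m$ picks up weight $(1-\epsilon)/2$ from $dx_0$ plus the $\jw_{x,B}$-weight of $p_x$ (equivalently, of $m'$, since $\jw_{x,B}$ is trivial on $y$-variables), so $\jw_{W_1^T}$ fixes $m$ exactly when $\jw_{x,B}$ acts on $m'$ with weight $(1-\epsilon)/2$.

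Substituting this dichotomy into Equation~(\ref{algmap1}) establishes the first assertion. When $\epsilon = 1$, the conditions ``$\jw_{W_1^T}$ fixes $m$'' and ``$\jw_{x,B}$ acts with weight $0$ on $m'$'' coincide; then the top branch produces $k_{m,g}\fjrw{m'}{(1/2,\alpha,1/2,\beta)} = k_{m,g}\fjrw{m}{g}$, and the bottom branch produces $k_{m,g}\fjrw{m'\, dx_0\wedge dy_0}{(0,\alpha,0,\beta)} = k_{m,g}\fjrw{\tilde m}{\sigma g}$, matching the lemma. When $\epsilon = 0$, the $dx_0$ factor flips the parity so the two conditions become complementary, and an identical check---using $\tilde m = m'$ and $\sigma g = (1/2,\alpha,1/2,\beta)$ in this case---again yields the outputs prescribed by the lemma.

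For the second assertion I would identify the cosets of $(\s G)^T$ inside $\s{G^T}$. A direct computation from the definition of the mirror group, using that $\sigma A_W$ is supported at the $x_0$ and $y_0$ positions (with value $1$ in each), shows $(\s G)^T = \{g' \in G^T : g'_{x_0} + g'_{y_0} \in \ZZ\}$, from which it follows that neither $\jw_{W_1^T}$ nor $\sigma$ lies in $(\s G)^T$ while $\jw_{W_1^T}\jw_{W_2^T} = \jw_{W^T} \in (\s G)^T$. Hence $\s{G^T}/(\s G)^T$ is generated by the cosets of $\jw_{W_1^T}$ and $\sigma$, and the additional condition required for $\sB_{BV}$-membership is invariance of $m$ under both of these elements. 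But $(\s G)^T$-membership already forces $g = (\epsilon/2, \alpha, \epsilon/2, \beta)$ as in Lemma~\ref{l:allornothing}, so either $m$ contains the $\sigma$-invariant factor $dx_0 \wedge dy_0$ ($\epsilon = 0$) or no $dx_0, dy_0$ at all ($\epsilon = 1$); in either case $\sigma$ acts trivially on $m$, leaving $\jw_{W_1^T}$-invariance as the only nontrivial extra condition.

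I expect the bookkeeping around the second assertion---in particular verifying that $\s{G^T}/(\s G)^T$ is generated by $\jw_{W_1^T}$ and $\sigma$ and that the $\sigma$-piece is automatically satisfied---to be the most delicate part; the first assertion is essentially an unpacking of Equation~(\ref{algmap1}) through the decomposition $m = (dx_0 \wedge dy_0)^{1-\epsilon}\cdot p_x \cdot p_y$.
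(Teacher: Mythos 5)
Your proof of the first assertion is essentially the paper's: the paper likewise unpacks Equation~\eqref{algmap1} by splitting on whether $\jw_{x,B}$ fixes $m$ or acts with weight $1/2$, observing that this together with the presence or absence of $dx_0\wedge dy_0$ determines whether $\jw_{W_1^T}$ fixes $m$; your reorganization by $\epsilon$ is the same computation. Where you genuinely diverge is the second assertion. The paper gets it almost for free from the twist-map machinery: the composition $tw_B^{-1}\circ\kappa_2^{-1}\circ tw_A\circ\kappa_1$ lands in $\sB_{BV}$ by construction, and the dichotomy stated just before the lemma (exactly one of $\fjrw{m}{g}$, $\fjrw{\tilde m}{\sigma g}$ lies in $\sB_{BV}$, via Lemma~\ref{l:allornothing}) then forces membership to coincide with the ``$\jw_{W_1^T}$ fixes $m$'' branch. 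You instead give a direct group-theoretic argument, computing $(\s G)^T=\{g'\in G^T: g'_{x_0}+g'_{y_0}\in\ZZ\}$ and showing $\s{G^T}/(\s G)^T\cong(\ZZ/2)^2$ is generated by $\jw_{W_1^T}$ and $\sigma$, with the $\sigma$-invariance of $m$ automatic. This is correct and has the virtue of being independent of the twist maps, so it characterizes $\sB_{BV}$-membership intrinsically; the paper's route is shorter but leans on surjectivity/injectivity facts proved in Theorem~\ref{t:twist}. One small point to tighten: the exclusion $\sigma\notin(\s G)^T$ does \emph{not} follow from your characterization alone, since $\sigma_{x_0}+\sigma_{y_0}=1\in\ZZ$; you must separately check $\sigma\notin G^T$, which holds because $\sigma A_W\jw_1^T=\tfrac12\notin\ZZ$ with $\jw_1\in J_1\times J_2\subset G$. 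With that line added, your coset count $[\s{G^T}:(\s G)^T]=4$ and the independence of $\jw_{W_1^T}$ and $\sigma$ modulo $(\s G)^T$ go through as you describe.
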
 

\begin{proof}
Suppose that $\jw_{W_1^T}$ fixes $m$.  If, also, $\jw_{x,B}$ fixes $m$, then we know that $m = m_x m_y$ and $g = (1/2, \alpha, 1/2, \beta)$ for some $\alpha$ and $\beta$.  In this case $tw_B(\fjrw{m'}{g'}) = \fjrw{m}{g}$, as desired. If, instead $\jw_{x, B}$ acts with weight 1/2 on $m_x$, then it must be that $m = m_x m_y dx_0 \wedge dy_0$ and $g = (0, \alpha, 0, \beta)$.  Again, this yields $tw_B(\fjrw{m'}{g'}) = \fjrw{m}{g}$.
	
If $\jw_{W_1^T}$ doesn't fix $m$, a similar analysis of the cases where $\jw_{x,B}$ fixes/doesn't fix $m$ shows that $tw_B(\fjrw{m'}{g'}) = \fjrw{\tilde m}{\sigma g}$ in both cases. 
\end{proof}

If $\varphi$ is any rescaling of the map above then it will be a degree preserving bijection between B--models.  It remains to define a rescaling and demonstrate that it will preserve products and pairings, which we do as follows:

\[
\varphi(\fjrw{m}{g}) = \left\{ \begin{array}{ll} \fjrw{m}{g} & \text{if } \fjrw{m}{g} \in \sB_{W^T, \s{G^T}} \\ 
\tfrac 12 \fjrw{\tilde m}{\sigma g} & \text{if } \fjrw{m}{g} \notin \sB_{W^T, \s{G^T}}, g = (0, \alpha, 0, \beta) \\ 
2\fjrw{\tilde m}{\sigma g} & \text{if } \fjrw{m}{g} \notin \sB_{W^T, \s{G^T}}, g = (1/2, \alpha, 1/2, \beta). \\ 
\end{array}\right.
\]

Notice that if $\fjrw{m}{g}$ and $\fjrw{n}{h}$ are elements in $\sB_{BHK}$ with $I_g \cup I_h \cup I_{gh} = \{x_0, \ldots, x_m, y_0 \ldots y_n\}$ and both are elements of $\sB_{BV}$, then $\fjrw{mn}{gh}$  must also be an element of $\sB_{BV}$. Also, if neither is an element of $\sB_{BV}$, then $\fjrw{mn}{gh}$ is an element of $\sB_{BV}$ (Since $\fjrw{\tilde m}{\sigma g}$ and $\fjrw{\tilde n}{\sigma h}$ are in $\sB_{BV}$ and thus their product must be).  Only if one  of  $\fjrw{m}{g}$ and $\fjrw{n}{h}$ is in $\sB_{BV}$ and the other is not, do we find that $\fjrw{mn}{gh} \notin \sB_{BV}$. 

Now we restrict the map $\varphi$ to $\bar\varphi:\sB^2\to \sB^0$ and notice it is an isomorphism of bi-graded vector spaces, since $\varphi$ preserves bi-degree. We check that $\bar\varphi$ is an isomorphism of Frobenius algebras in two steps: first we check that it preserves products, and then we check that it preserves the pairing. 

\begin{lem} The map $\bar\varphi$ preserves products.
\end{lem}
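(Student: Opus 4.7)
The plan is to reduce to basis elements of $\sB^2 \subset \sB_{BHK}$ by multilinearity of the B--model product and $\CC$--linearity of $\bar\varphi$. By the B--model analog of Lemma~\ref{l:allornothing}, any basis element with nonempty sector has $g = (\epsilon/2,\alpha,\epsilon/2,\beta)$ for $\epsilon \in \{0,1\}$, and similarly for $h$. The criterion from the lemma preceding the definition of $\varphi$ partitions basis elements into those lying in $\sB_{BV}$ (characterized by $\jw_{W_1^T}$ fixing $m$) and those not; this matches the two--case structure of the definition of $\varphi$.

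For each of the finitely many combinations of $(\epsilon,\epsilon') \in \{0,1\}^2$ and of whether $\fjrw{m}{g}$, $\fjrw{n}{h}$ each lie in $\sB_{BV}$, I would write out both sides of the desired equality using the B--model product $\fjrw{m}{g}\star\fjrw{n}{h} = \gamma_{g,h}\fjrw{mn}{gh}$. In every case the sectors on both sides differ by at most multiplication by $\sigma$ (which is absorbed since $\sigma^2$ is trivial), and the $\tilde{\phantom m}$ operation on monomials (swapping whether $dx_0 \wedge dy_0$ appears) is consistent with the transition between the volume forms $\omega_g$ and $\omega_{\sigma g}$; so the equality reduces to matching scalar coefficients on a common basis element of $\sB_{BV}$.

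The scalar comparison in each case is a product of rescaling factors from $\varphi$ (drawn from $\{1,\tfrac12,2\}$) times a structure constant $\gamma_{g',h'}$ with $g' \in \{g,\sigma g\}$, $h' \in \{h,\sigma h\}$. Lemma~\ref{lem:gamma_relations}, combined with the commutativity $\gamma_{g,h} = \gamma_{h,g}$ valid on the even--graded subspace, rewrites each such $\gamma_{g',h'}$ as a multiple of $\gamma_{g,h}$ by a factor in $\{1,4,\tfrac14\}$; the rescaling factors $1, \tfrac12, 2$ in the definition of $\varphi$ were chosen precisely to invert these multipliers. For example, if both inputs lie outside $\sB_{BV}$ with $\epsilon=\epsilon'=0$, the right--hand side contributes $(\tfrac12)(\tfrac12)\gamma_{\sigma g,\sigma h} = \tfrac14 \cdot 4\gamma_{g,h} = \gamma_{g,h}$; analogous cancellations occur in the remaining combinations.

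The main obstacle will be organizing the mixed cases, in which exactly one of $\fjrw{m}{g},\fjrw{n}{h}$ lies in $\sB_{BV}$ or in which $\epsilon \ne \epsilon'$. In these cases the inner product $\fjrw{mn}{gh}$ may itself fall outside $\sB_{BV}$, so the outer application of $\bar\varphi$ on the left--hand side contributes an additional rescaling and $\sigma$--twist that must be tracked against those already accumulated on the right. Once the cases are tabulated, each subcase reduces to a short numerical identity coming from Lemma~\ref{lem:gamma_relations}, completing the verification.
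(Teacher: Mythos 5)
Your proposal is correct and follows essentially the same route as the paper's proof: reduce by multilinearity to basis elements, split into cases according to whether each factor (and hence their product) lies in $\sB_{BV}$, and match the rescaling constants $1,\tfrac12,2$ against the factors $1,4,\tfrac14$ supplied by Lemma~\ref{lem:gamma_relations} together with the symmetry of $\gamma$; your sample computation reproduces the paper's Case B(1), and your remark about the extra factor from the outer application of $\bar\varphi$ is exactly the paper's Case C. Just be sure, when tabulating, to also record that $I_g \cup I_h \cup I_{gh}$ covers all variables if and only if $I_{\sigma g} \cup I_{\sigma h} \cup I_{gh}$ does, so that the two sides vanish simultaneously.
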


\begin{proof}
We will denote the product on $\sB^2$ by $\star_2$ and the product on $\sB^0$ by $\star_0$. 
	Based on the definition of $\bar\varphi$, there are several cases to check. 
	
	\noindent \textbf{Case A:} Suppose that $\fjrw{m}{g}$ and $\fjrw{n}{h}$ are each elements of both $\sB^2$
	and $\sB^0$. Based on the previous remarks, $\fjrw{mn}{gh}$ is in both as well, and so 
	\[
	\begin{array}{l}
	\bar\varphi(\fjrw{m}{g} \star_2 \fjrw{n}{h}) = \bar\varphi( \gamma_{g,h} \fjrw{mn}{gh}) = \gamma_{g,h} \fjrw{mn}{gh} \\
	=
	\fjrw{m}{g} \star_0 \fjrw{n}{h}=\bar\varphi(\fjrw{m}{g}) \star_0 \bar\varphi(\fjrw{n}{h})
	\end{array}
	\]
	
	\noindent \textbf{Case B:} Next, suppose that $\fjrw{m}{g}$ and $\fjrw{n}{h}$ are each elements of $\sB^2$, but neither are elements of $\sB^0$.   
	Notice that in this case 
	\[
	\bar\varphi(\fjrw{m}{g}) \star_0 \bar\varphi(\fjrw{ n}{ h})=k_g\fjrw{\tilde m}{\sigma g} \star_0 k_h\fjrw{\tilde n}{\sigma h} = k_gk_h\gamma_{\sigma g, \sigma h} \fjrw{mn}{ gh},
	\]
	since $\sigma g \sigma h = gh$.  
	Thus, to verify that  
	$
	\bar\varphi(\fjrw{m}{ g}) \star_0  \bar\varphi( \fjrw{n}{ h}) =\bar\varphi(\fjrw{m}{ g} \star_2   \fjrw{n}{ h})$, 
	we only need to check that 
	\[
	k_gk_h\gamma_{\sigma g, \sigma h} \fjrw{mn}{gh}= \gamma_{g, h} \fjrw{mn}{ gh}.
	\]

	There are three relevant cases. 
	\begin{enumerate}
		\item $g$ and $h$ are both of the form $(0, \alpha, 0, \beta)$
		\item $g$ and $h$ are both of the form $(1/2, \alpha, 1/2, \beta)$
		\item $g$ is of one of the above forms and $h$ is of the other.
	\end{enumerate}
	
	It is straightforward to verify that in each case, 
	$I_g \cup I_h \cup I_{gh} = \{x_0, \ldots, x_m,y_0,\ldots,y_n\}$
	if and only if 
	$I_{\sigma g} \cup I_{\sigma h} \cup I_{gh} = \{x_0, \ldots, x_m,y_0,\ldots,y_n\}$.

	We verify the details of the preservation of the product in only the first case; the others follow by similar arguments. 
	\begin{enumerate}
		\item  $g$ and $h$ are both of the form $(0, \alpha, 0, \beta)$. \\
		Here, $k_g = k_h = 1/2$. Suppose that $I_g \cup I_h \cup I_{gh} = \{x_0, \ldots, x_m,y_0,\ldots,y_n\}$. 
		Lemma \autoref{lem:gamma_relations} gives 
		\[
		\gamma_{g,h}  = \gamma_{g,\sigma h} 
		=\tfrac 14  \gamma_{\sigma g,\sigma h}
		\]
		
		So, we have, $k_gk_h\gamma_{\sigma g, \sigma h} = 1/4 \cdot \gamma_{\sigma g,\sigma h} = \gamma_{g,h}$, as desired. 

	\end{enumerate}

	\noindent \textbf{Case C:} Finally, we must consider the case when $\fjrw{m}{g}, \fjrw{n}{h}$ are both elements of $\sB^2$, but only one of them is an element of $\sB^0$.  Without loss of generality, we say $\fjrw{m}{g} \in \sB^0$.  Notice that in this case
	\[
	\bar\varphi(\fjrw{m}{g} \star_2 \fjrw{n}{h}) = \bar\varphi(\gamma_{g,h} \fjrw{mn}{gh}) = \gamma_{g,h}k_{gh} \fjrw{\widetilde{mn}} {\sigma gh},
	\]
	and, 
	\[
	\bar\varphi(\fjrw{m}{g}) \star_0 \bar\varphi(\fjrw{n}{h}) = \fjrw{m}{g} \star_0 k_h \fjrw{\tilde n}{\sigma h} = \gamma_{g,\sigma h}k_{h} \fjrw{\widetilde{mn}} {\sigma gh}.
	\]
	We must prove $\gamma_{g,h}k_{gh} \fjrw{mn} {\sigma gh} =\gamma_{g,\sigma h}k_{h} \fjrw{mn} {\sigma gh}$ in  the following  four cases: 
	\begin{enumerate}
		\item $g$ and $h$ are both of the form $(0, \alpha, 0, \beta)$ ($k_h =k_{gh}= 1/2$)
		\item $g$ and $h$ are both of the form $(1/2, \alpha, 1/2, \beta)$ ($k_h = 2, k_{gh}= 1/2$).
		\item $g$ is of the form $(0, \alpha, 0, \beta)$, and $h$ is of the form $(1/2, \alpha, 1/2, \beta)$ ($k_h = k_{gh}= 2$)
		\item $g$ is of the form $(1/2, \alpha, 1/2, \beta)$, and $h$ is of the form $(0, \alpha, 0, \beta)$ ($k_h= 1/2,  k_{gh}= 2$).
	\end{enumerate}
	
	Again, we verify the preservation of the product in the first case only; the others follow similarly. 
	\begin{enumerate}
		\item $g$ and $h$ are both of the form $(0, \alpha, 0, \beta)$ ($k_h =k_{gh}= 1/2$).  From Lemma \ref{lem:gamma_relations}, 
		$
		\gamma_{g,h}  
		=  \gamma_{g,\sigma h}.
		$
		So, 
		$k_{gh}\gamma_{g,h} = \tfrac 12 \gamma_{g,h} = \tfrac 12 \gamma_{g,\sigma h} = k_{h}\gamma_{g,\sigma h}$, as desired. 

	\end{enumerate}
	
\end{proof}

\begin{rem}
	Recall that for an element $\fjrw{m}{g}$ from an LG state space, $m$ represents a monomial together with a volume form.  In the following proof, we are sometimes interested in only the monomial portion of $m$ (which by abuse of notation, we will call $m$).  Further, we point out that the monomial portion of both $m$ and $\tilde{m}$ are the same. 
\end{rem}

\begin{lem}The map $\bar\varphi$ preserves the pairing. 
\end{lem}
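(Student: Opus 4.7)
The plan is to verify $\langle \bar\varphi(\fjrw{m}{g}), \bar\varphi(\fjrw{n}{h})\rangle = \langle \fjrw{m}{g}, \fjrw{n}{h}\rangle$ by a case analysis mirroring Cases A, B, and C of the preceding lemma on multiplication. The residue pairing is nonzero only on diagonal sector-pairs $(g,g^{-1})$, and $\bar\varphi$ either fixes a sector or transports it to its $\sigma$-twin at $\sigma g$. Since $\sigma(g^{-1})=(\sigma g)^{-1}$, the dual-sector condition is preserved by the $\sigma$-twist, so the whole computation reduces to comparing the residue pairing on $\sQ_{W_g}\omega_g$ with the residue pairing on $\sQ_{W_{\sigma g}}\omega_{\sigma g}$.

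First I would record the key numerical input. For $g=(0,\alpha,0,\beta)$ one has $W_g=x_0^2+y_0^2+W_{\sigma g}$; since the Jacobian ideal kills $x_0,y_0$, we get $\sQ_{W_g}\cong\sQ_{W_{\sigma g}}$ as $\CC$-algebras and $\mu_{W_g}=\mu_{W_{\sigma g}}$, while $\Hess(W_g)=4\,\Hess(W_{\sigma g})$. Applying the defining relation \eqref{e:milpairing} on both sides then yields
\[
\langle m,n\rangle_{W_g}=\tfrac14\langle\tilde m,\tilde n\rangle_{W_{\sigma g}},
\]
after identifying $m$ with $\tilde m$ via the volume-form factor $dx_0\wedge dy_0$.

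Next, I would run the case analysis. In Case A (both $\fjrw{m}{g},\fjrw{n}{h}\in\sB_{BV}$) the map $\bar\varphi$ is the identity on each, and the pairings match trivially. In Case B (neither in $\sB_{BV}$), both elements are sent to their $\sigma$-twins with scaling $k_g,k_h$; the product $k_g k_h$ equals $\tfrac14$ when $g=(0,\alpha,0,\beta)$ (so $k_g=k_h=\tfrac12$) and $4$ when $g=(1/2,\alpha,1/2,\beta)$ (so $k_g=k_h=2$), which are exactly the factors needed to compensate the Hessian-pairing ratio from the previous step. In Case C (exactly one of the two elements lies in $\sB_{BV}$), I will show that both pairings vanish: if $\fjrw{m}{g}\in\sB_{BV}$ then $\jw_{W_1^T}$ fixes $m$, while $\fjrw{n}{h}\notin\sB_{BV}$ means $\jw_{W_1^T}$ acts nontrivially on $n$; since $\jw_{W_1^T}$ also fixes $\Hess(W_g)$, the criterion of Remark~\ref{pairing:conditions} forces $\langle m,n\rangle_{W_g}=0$, and a parallel argument after applying $\bar\varphi$ gives vanishing of the paired expression in the other B-model.

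The main technical point I expect to spend time on is the volume-form and Hessian bookkeeping in the numerical input, verifying that the factor $\tfrac14$ arises solely from the quadratic summands $x_0^2+y_0^2$ and is independent of the data on the $\alpha,\beta$-fixed variables. The subtler conceptual step is Case C, where I must ensure that the invariance-based vanishing argument genuinely applies simultaneously in both B-models rather than only one of them.
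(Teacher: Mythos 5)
Your proposal is correct and follows essentially the same route as the paper: the same three-way case split (both, neither, or exactly one of the two elements in $\sB_{BV}$), the same numerical input $\Hess(W_g)=4\Hess(W_{\sigma g})$ with $\mu_g=\mu_{\sigma g}$ for $\sigma g$ obtained by dropping the quadratic summands $x_0^2+y_0^2$, the same compensation by the factors $k_gk_{g^{-1}}\in\{\tfrac14,4\}$, and the same $\jw_{W_1^T}$-weight argument for vanishing in the mixed case.
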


\begin{proof}
	Recall that if $\langle \fjrw{m}{g}, \fjrw{n}{g^{-1}}\rangle_{\sB^2} \neq 0$, then we have 
	\[
	mn = \frac{\langle \fjrw{m}{g}, \fjrw{n}{g^{-1}}\rangle_{\sB^2}}{\mu_g}\Hess(W_g) + l.o.t
	\]
	If both $\fjrw{m}{g}$ and  $\fjrw{n}{g^{-1}}$ are in $\sB^0$, then the pairings are computed in exactly the same way in both $\sB^2$ and $\sB^0$ 
	
	If neither $\fjrw{m}{g}$ nor $\fjrw{n}{g^{-1}}$ are in $\sB^0$, then we have two cases to consider. 
	\begin{enumerate}
		\item $g$ is of the form $(0, \alpha, 0, \beta)$. \\
		\[
		m \cdot n = \frac{\langle \fjrw{m}{g}, \fjrw{n}{g^{-1}} \rangle_{\sB^2}}{\mu_g} \Hess(W_g)  = \frac{\langle \fjrw{m}{g}, \fjrw{n}{g^{-1}} \rangle_{\sB^2}}{\mu_{\sigma g}} 4 \Hess(W_{\sigma g}) 
		\]
		Thus, for $\bar\varphi(\fjrw{m}{g}) = \tfrac 12 \fjrw{\tilde m}{\sigma g}$, and  $\bar\varphi(\fjrw{n}{g^{-1}}) = \tfrac 12 \fjrw{\tilde n}{\sigma g^{-1}}$
		\[
		\tfrac 12 m \cdot \tfrac 12 n   = \frac{\langle \fjrw{m}{g}, \fjrw{n}{g^{-1}} \rangle_{\sB^2}}{\mu_{\sigma g}}  \Hess(W_{\sigma g}),
		\]
		and $ \langle \fjrw{m}{g}, \fjrw{n}{g^{-1}} \rangle_{\sB^2} = \langle \bar\varphi(\fjrw{m}{g}), \bar\varphi(\fjrw{n}{g^{-1}}) \rangle_{\sB^0}$.
		
		\item $g$ is of the form $(1/2, \alpha, 1/2, \beta)$. \\
		\[
		m \cdot n = \frac{\langle \fjrw{m}{g}, \fjrw{n}{g^{-1}} \rangle_{\sB^2}}{\mu_g} \Hess(W_g)  = \frac{\langle \fjrw{m}{g}, \fjrw{n}{g^{-1}} \rangle_{\sB^2}}{\mu_{\sigma g}} \tfrac 14 \Hess(W_{\sigma g}) 
		\]
		Thus, for $\bar\varphi(\fjrw{m}{g}) = 2\fjrw{\tilde m}{\sigma g}$, and  $\bar\varphi(\fjrw{n}{g^{-1}}) = 2\fjrw{\tilde n}{\sigma g^{-1}}$
		\[
		2m \cdot 2n   = \frac{\langle \fjrw{m}{g}, \fjrw{n}{g^{-1}} \rangle_{\sB^2}}{\mu_{\sigma g}}  \Hess(W_{\sigma g}). 
		\]
		So, $ \langle \fjrw{m}{g}, \fjrw{n}{g^{-1}} \rangle_{\sB^2} = \langle \bar\varphi(\fjrw{m}{g}), \bar\varphi(\fjrw{n}{g^{-1}}) \rangle_{\sB^0}$.
	\end{enumerate}
	
	Note that if $\fjrw{m}{g}$ is in $\sB^0$ but  $\fjrw{n}{g^{-1}}$ is not, then $\langle\fjrw{m}{g},\fjrw{n}{g^{-1}}\rangle=0$, since the $\jw_x$ acts with weight zero on the Hessian. 
	
\end{proof}

We have verified that $\bar\varphi$ preserves both the product and the pairing, and therefore, the composition, $\bar\varphi \circ \kappa_1^{-1}$ is an isomorphism of Frobenius algebras when \autoref{property} is satisfied.

\bibliographystyle{plain}
\bibliography{references}
\end{document}